\colorlet{darkred}{red!40!black}
\newdelim{\ip}{\langle}{\rangle}
\newcommand{\leqnomode}{\tagsleft@true\let\veqno\@@leqno}
\newcommand{\reqnomode}{\tagsleft@false\let\veqno\@@eqno}
\newcommand{\mylabel}[2]{\def\@currentlabel{#2}\label{#1}}
\newcommand{\TT}{\mathbb{T}}
\newcommand{\vast}{\bBigg@{4}}
\newcommand{\Vast}{\bBigg@{5}}
\begin{document}
\title[Dissipation enhancement]%
{Dissipation enhancement for a degenerated parabolic equation}
\author[Feng]{Yu Feng}
\address{Beijing International Center for Mathematical Research, Peking University, No. 5 Yiheyuan Road Haidian District, Beijing, P.R.China 100871}
\email{fengyu@bicmr.pku.edu.cn}

\author[Hu]{Bingyang Hu}
\address{%
  Department of Mathematics, Purdue University, 150 N. University St.,W. Lafayette, IN 47907, USA}
\email{hu776@purdue.edu}

\author[Xu]{Xiaoqian Xu}
\address{%
  Zu Chongzhi Center for Mathematics and Computational Sciences, Duke Kunshan University, China}
\email{xiaoqian.xu@dukekunshan.edu.cn}

\date{\today}

\subjclass[2010]{Primary 76R05, Secondary 35B27, 35B44, 35Q35}%

\keywords{Dissipation enhancement, mixing, non-linear dissipation time}%
\thanks{}

\begin{abstract}
In this paper, we quantitatively consider the enhanced-dissipation effect of the advection term to the parabolic $p$-Laplacian equations. More precisely, we show the mixing property of flow for the passive scalar enhances the dissipation process of the $p$-Laplacian in the sense of $L^2$ decay, that is, the $L^2$ decay can be arbitrarily fast. The main ingredient of our argument is to understand the underlying iteration structure inherited from the parabolic $p$-Laplacian equations. This extends the dissipation enhancement result of the advection diffusion equation by Yuanyuan Feng and Gautam Iyer into a non-linear setting. 
\end{abstract}

\maketitle

\section{Introduction}

In the study of incompressible fluids, one fundamental phenomenon that arises in a wide variety of applications is \emph{dissipation enhancement}, whose mechanism, in general, comes from the following two primary sources: \emph{mixing}, which induces filamentation and facilitates the formation of small scales (namely, high frequencies); and \emph{diffusion}, which efficiently damps small scales and accelerates the dissipation process.

Without diffusion, the behavior of passive scalar advected by incompressible mixing flows has been extensively studied in recent decades. In particular, some researchers developed multi-scale norms to quantify mixing (see \cites{LinThiffeaultEA11,Thiffeault12}) and analyzed the decay rate of such norms for incompressible flows (see, e.g.,\cites{IyerKiselevEA14, YaoZlatos17,ElgindiZlatos19,AlbertiCrippaEA16,LunasinLinEA12}). On the other hand, an apriori limit to the resolution via mixing is inaccessible in this regime.

\emph{In the presence of diffusion, the effects of mixing may be enhanced, balanced, or even suppressed by diffusion} (see, e.g.,  \cite{FengIyer19}). One of the most famous models used to study the interaction between these two sources is the advection-diffusion equation
\begin{equation}\label{eq:AD}
\begin{cases}
    \partial_t\vartheta +u\cdot\grad\vartheta-\nu\lap\vartheta=0 \\
    \\
    \vartheta(0,x)=\vartheta_0\in L_{0}^2,
\end{cases}
\end{equation}
where $u$ stands for a time-dependent or time-independent incompressible velocity field, $\nu>0$ presents the strength of diffusion, proportional to the inverse P\'{e}clet number and $L_0^2$ refers the collection of all $L^2(\T^d)$ functions with mean zero. 

The study of the dissipation enhancement for the linear advection equations is a popular topic in recent years. It dates back to the celebrated work \cite{ConstantinKiselevEA08} by Constantin, Kiselev, Ryzhik and Zlato\v{s} in 2008, in which, they first found an equivalent condition for the time-independent $u$ to enhance the dissipation effect, in the sense of the $L^2$ norm faster decay of the solution, on a compact manifold like torus. Later, there have been many other extension along this direction (see \cites{Zlatos10, wei2021SciChinaMath}). In particular, in \cite{ZelatiDelgadinoElgindi20,FengIyer19}, the authors studied the dissipation enhancement of \eqref{eq:AD} under various mixing conditions, and notably their methods lead to an explicit quantitative bound when the mixing flow $u$ is assumed to be polynomial and exponential. 

As some byproducts, understanding dissipation enhancement can help us study stabilization phenomena of the singularity. We refer the interested reader  \cites{FannjiangKiselevEA06,BerestyckiKiselevEA10,KiselevXu16,BedrossianHe17,He18,HeTadmor19,fengfengIyerThiffeault2020nonlinearscience,feng2020global, feng2020suppression} and the reference therein for more details. 

The goal of this paper is to study the effect of mixing on dissipation enhancement to a non-linear diffusion model. One natural nonlinear model which resembles \eqref{eq:AD} is the $p$-Laplacian dissipation. More precisely,  we consider the following system: for $p>2$, $u$ an incompressible flow on $\T^d$ and $\nu>0$, 
 \begin{equation} \label{eq:AD2}
\begin{cases}
\partial_t \theta+\paren{u(t) \cdot \grad} \theta-\nu \grad \cdot \left(\abs{\grad \theta }^{p-2} \grad \theta \right)=0; \\
\\
\theta(x,0)=\theta_{0}(x) \in L_0^2. 
\end{cases}
\end{equation} 
Here $p$ measures the level of diffusion, and it is clear that when $p=2$, \eqref{eq:AD2} reduces the regular Laplacian dissipation.

The interest in considering such equation is motivated by the study of following advective thin-film type equation: 
\begin{equation} \label{20200716eq01}
    h_t + u\cdot\grad h + A_1\lap h + A_2\lap^2 h + A_3\grad\cdot(\abs{\grad h}^{p-2}\grad h) = g, \quad p>2,
\end{equation}
where $h(t, x)$ denotes the height of a film in epitaxial growth with $g(t,x)$ being the deposition flux and $A_1, A_2, A_3 \in \R$. The spatial derivatives in the above equation have the following physical interpretations:
\begin{itemize}
    \item [(1). ] $u\cdot\grad h$ : transportation under a velocity field $u$.
    \item [(2). ] $A_1\lap h$: diffusion due to evaporation-condensation~\cites{edwards1982surface,mullins1957theory}; 
    \item [(3). ] $A_2\lap^2 h$: capillarity-driven surface diffusion~\cites{herring1999surface,mullins1957theory}; 
    \item [(4). ] $A_3\grad\cdot(\abs{\grad h}^{p-2}\grad h)$: (upward) hopping of atoms~\cite{sarma1992solid}.
\end{itemize}
The dissipation enhancement phenomenon to the linear diffusion such as $A_1\lap h$ and $A_2\lap^2 h$ have been well studied in \cites{FengIyer19,ZelatiDelgadinoElgindi20}. However, as far as we know, the study of dissipation enhancement to the nonlinear diffusion $A_3\grad\cdot(\abs{\grad h}^{p-2}\grad h)$ is still less understood. Therefore, we separate it from the thin film equation \eqref{20200716eq01} and consider equation \eqref{eq:AD2} for simplicity. It is worth mentioning that the case $A_3$ takes negative values in \eqref{20200716eq01} has also been investigated recently in \cites{ishige2020blowup,feng2020suppression}.

Further more, \eqref{eq:AD2} itself has wide applications in the mathematical modeling of various real-world processes, such as the flows of electrorheological or thermo-rheological fluids~\cites{antontsev2014asymptotic,antontsev2006stationary,ruzicka2000electrorheological}, the problem of thermistors~\cites{zhikov2008solvability}, and processing of digital images~\cite{chen2006variable}.

In this paper, we consider how the advection terms can help the $p$-Laplacian term to dissipate the solution's $L^2$ norm. The novelty of this paper is two fold.

\begin{enumerate}
    \item [1.] The main difference between \eqref{eq:AD2} and the linear model \eqref{eq:AD} is that its solution becomes degenerate or singular at the points where $\abs{\grad\theta}=0$, preventing one from expecting classical solutions. We overcome this difficulty by showing the weak solutions satisfy specific regularity properties (see Section \ref{20210223sec01}), and then we can pick up test functions to obtain desired energy identities (see, e.g., \eqref{e:energy identity1} and \eqref{e:energy identity2});
    
    \medskip
    
    \item [2.] The solution operator of \eqref{eq:AD2} is non-linear, which suggests that the semigroup approach inherited under the linear model \eqref{eq:AD} might fail in our case. Instead of using the semigroup structure, we explore a new iteration structure (see Lemma \ref{20210121lem021}) underlying the equation \eqref{eq:AD2}, which essentially plays the same role as the semigroup structure in the linear model case.
\end{enumerate}

The plan of this paper is as follows. We begin by defining mixing rates, introduce the non-linear dissipation time, and state our main results, as well as some applications in Section 2. Section 3 is devoted to explore several properties for the weak solution of the equation \eqref{eq:AD2}. As a consequence, we give a prior estimate for the non-linear dissipation time. Finally, in Section 4, we prove the quantitative dissipation time bound for the $p$-Laplacian dissipation. The proof involves a study of the underlying iteration structure of the equation \eqref{eq:AD2}.

\subsection*{Acknowledgments}
The authors would like to thank the anonymous referees for their helpful comments.

\section{Main results} 

Let $\TT^d:=[0, 1]^d$ be $d$-dimensional torus, and $u$ be a smooth, time dependent, divergence free vector field on  $\TT^d$. Let further, 
$$
0<\lambda_1 \le \lambda_2 \le \dots
$$
be the eigenvalues of $-\lap$ on $\TT^d$. Moreover, we denote $L^p:=L^p(\TT^d)$ with norm denoted as $\|\cdot\|_p$. And for any $\alpha \in \R$, recall the \emph{homogeneous Sobolev space of order $\alpha$} is given by 
$$
\dot{H}^{\alpha}=\dot{H}^{\alpha}(\TT^d):=\left\{f=\sum_i a_i e_i: \norm{f}_{\dot{H}^{\alpha}}^2:=\sum_i \lambda_i^\alpha \abs{a_i}^2<\infty\right\}, 
$$
where in the above definition, $e_i$ is the normalized eigvenvector corresponding to the eigenvalue $\lambda_i$, $i \ge 1$. Note that under this formulation, $\|\grad(\cdot)\|_2=\|\cdot\|_{\dot{H}^{1}}$. 

The goal of this paper is to introduce and study the dissipation enhancement of advection to a nonlinear diffusivity. Let $\nu>0$ be the strength of the diffusion. Now for $p>2$ and each time $s \ge 0$, we consider following non-linear parabolic equation with gradient nonlinearity on the $d$-dimensional torus with advection of an incompressible vector field $u(t)$: 
\begin{equation} \label{maineq}
\begin{cases}
\partial_t \theta_s+\paren{u(t) \cdot \grad }\theta_s-\nu \lap_p \theta_s =0; \\
\\
\theta_s(t)=\theta_{s, 0}, \quad \quad  t=s. 
\end{cases}
\end{equation} 
for $t>s$, with initial data $\theta_{s, 0}=\theta_0(s)$. Here,
\begin{enumerate}
    \item [1.] $\nabla$ is the covariant derivative; 
    
    \medskip
    
    \item [2.] $\lap_p \theta_s:= \grad \cdot \left(\abs{\grad \theta_s }^{p-2} \grad \theta_s \right)$ is the $p$-Laplacian;
    
    \medskip
    
    \item [3.] $\theta_{s, 0}=\theta_{0}(s)$, where $\theta_0(\cdot)$ is the solution of \eqref{maineq} with $s=0$ and initial data $\theta_{0, 0} \in L^2_0(\TT^d)$, which is the space of $L^2$ integrable functions on $\TT^d$ with mean zero.
\end{enumerate} 

\begin{remark}
We make a remark that the solution of \eqref{maineq} should be understood in weak sense. Moreover, all these solutions have certain regularity (see Theorem \ref{regthm}), which, in particular, guarantees that $\theta_{s, 0}$ is a measurable function and hence \eqref{maineq} is well-defined. Furthermore, we can actually see that $\theta_{s, 0}$ also belongs to $L_0^2(\TT^d)$ (see Corollary \ref{20210223cor01}).  
\end{remark}

We are interested in the behavior of solutions of \eqref{maineq} for $\nu \ll 1$ and a fixed initial data $\theta_{0, 0}$. The prototype of our model is the \emph{linear diffusion equation}  . One typical example would be 
\begin{equation} \label{20201222eq01}
\begin{cases}
\partial_t \vartheta_s+\paren{u(t) \cdot \grad} \vartheta_s-\nu \left(-\lap \right)^\alpha \vartheta_s =0; \\
\\
\vartheta_s(t)=\vartheta_{s, 0}, \quad \quad  t=s,
\end{cases}
\quad \alpha>0,
\end{equation} 
for $t>s$, with initial data $\vartheta_{s, 0} \in L_0^2(\TT^d)$, where $\lap$ is the Laplace-Beltrami operator on $\TT^d$. Observe that when $\alpha=1$, \eqref{20201222eq01} becomes the \emph{advection diffusion equation} (see, e.g., \cite{ConstantinKiselevEA08}); when $\alpha=2$, \eqref{20201222eq01} refers to the \emph{advective hyperdiffusion equation}. In the sequel, our interest will lie in the case when $\alpha=1$, as this is exactly our main equation \eqref{maineq} with $p=2$, which can be viewed as the ``endpoint case'' for the non-linear model. The dissipation enhancement in the advection diffusion equation (when $\alpha=1$) has been studied a lot in the recent years (see \cites{ConstantinKiselevEA08,FengIyer19,ZelatiDelgadinoElgindi20}), and the results therein can be easily adapted to the case when $\alpha\ge 1$. For linear fractional diffusions on the torus i.e. $0<\alpha<1$, the dissipation enhancement phenomenon has also been characterized in the Appendix B of \cite{hopf2018aggregation}.

One crucial concept to describe the behavior of the linear model \eqref{20201222eq01} when $\nu \ll 1$ is the \emph{linear dissipation time} of the flow $u$ for the linear models (i.e.  the time required for the system to dissipate a constant fraction of its initial energy) is given by 
\begin{equation} \label{20201222eq02}
\tau_d:=\sup_{s \in \R} \left\{\inf \left\{ t-s \bigg | t \ge s, \ \textrm{and} \  \|\vartheta_s(t)\|_2 \le \frac{\|\vartheta_{s, 0}\|_2}{e} \ \textrm{for all} \ \theta_{s, 0} \in L^2_0(\TT^d) \right\}\right\}.
\end{equation} 
Note that, for example (say, $\alpha=1$), since the solution of \eqref{20201222eq01} is strong, we are able to multiply $\vartheta_s$ on both sides of the equation in \eqref{20201222eq01} and integrate over $\TT^d$ to see
\begin{equation} \label{20210220eq01}
\frac{1}{2} \partial_t \|\vartheta_s(t)\|_2^2+\nu \|\vartheta_s\|_{\dot{H}^1}^2=0,
\end{equation}
and hence
\begin{equation} \label{20200221eq01}
\|\vartheta_s(t)\|_2 \le e^{-\nu\lambda_1 (t-s)} \|\vartheta_{s, 0}\|_2
\end{equation}
This implies
    \begin{equation} \label{20201222eq03}
    \tau_d \le \frac{1}{\nu \lambda_1},
    \end{equation}
where we recall that $\lambda_1$ is the principal eigvenvalue of $-\lap$ on $\TT^d$. Moreover, it turns out this is the best one can hope, that if $u$ is only assumed to be incompressible (see \cite{ConstantinKiselevEA08}). 

An important feature for $\tau_d$ is that when the flow $u$ is assumed to be \emph{mixing}, one can improve the estimate \eqref{20201222eq03} into, heuristically, 
\begin{equation} \label{20210220eq02}
\tau_d \le \frac{C}{\nu \lambda_N},
\end{equation}
where $C$ is a universal constant and $N$ is a constant which only depends on the equation and the mixing condition (see, e.g. \cites{FengIyer19, ZelatiDelgadinoElgindi20} for a more comprehensive treatment). It's worth to mention that the upper bound of $\tau_d$ obtained in \cite{FengIyer19} is not as sharp as the ones obtained from \cite{ZelatiDelgadinoElgindi20}, the reason of this will be discussed in detail in Remark~\ref{rmk:diff estimate}. Now, let us first recall the mixing condition. 

\begin{definition}\label{def:mixing estimates}
Let $h: [0, \infty) \to (0, \infty)$ be a strictly decreasing function that vanishes at infinity, and $0 \le \alpha<\infty, \beta \ge 0$. Let further, $\varphi_{s, t}: \TT^d \to \TT^d$ be the flow map of $u$ defined by
$$
\partial_t (\varphi_{s, t})=-u\left(\varphi_{s, t}\right) \quad \textrm{and} \quad \varphi_{s, s}=\textrm{Id}.
$$
\begin{enumerate}
    \item [(1)] We say that the flow $u$ is \emph{strongly $\alpha, \beta$ mixing with rate function $h$} if for all $f \in \dot{H}^{\alpha}, g \in \dot{H}^\beta$, we have
    \begin{equation} \label{strongmix}
    \left| \langle f \circ \varphi_{s, t}, g \rangle \right| \le h(t-s) \|f\|_{\dot{H}^\alpha}\|g\|_{\dot{H}^\beta}.
    \end{equation}
    Or equivalently, for all $f \in \dot{H}^\alpha$, there holds
    $$
    \|f \circ \varphi_{s, t}\|_{\dot{H}^{-\beta}} \le h(t-s) \|f\|_{\dot{H}^\alpha}; 
    $$

    \item [(2)] We say that $\varphi$ is \emph{weakly $\alpha, \beta$ mixing with rate function $h$} if for all $f \in \dot{H}^\alpha, g \in \dot{H}^\beta$, we have
    \begin{equation} \label{weakmix}
    \left( \frac{1}{t-s} \int_s^t \left| \langle f \circ \varphi_{s, r}, g \rangle \right| ^2 dr\right)^{\frac{1}{2}} \le h(t-s) \|f\|_{\dot{H}^\alpha}\|g\|_{\dot{H}^\beta}. 
    \end{equation}
\end{enumerate}
\end{definition}

\begin{remark}
\begin{enumerate}
    \item [1.]The decay estimates \eqref{strongmix} and \eqref{weakmix} arise from the literature of dynamical systems, and the traditional choice therein is to use H\"{o}lder norms. However, for convenience to our purposes, we follow the research by Fannjiang et al. in \cites{FannjiangWoowski03,FannjiangNonnenmacherEA04,FannjiangNonnenmacherEA06} and use Sobolev norms instead. 
    \medskip
   \item [2.] The analog of Definition \ref{def:mixing estimates} in discrete time was considered in \cite{FengIyer19} as well, which has been investigated more deeply than the continuous case (See Section 4 and Appendix A of \cite{FengIyer19} for a more detailed discussion).
\end{enumerate}
\end{remark}

The phenomenon of improving from \eqref{20201222eq03} to \eqref{20210220eq02} under the mixing condition is referred as \emph{dissipation enhancement}. The purpose of this paper is to explore such a phenomenon for the advection equation with $p$-Laplacian evolution \eqref{maineq}. To state the main results, we first extend the definition of the linear dissipation time to its non-linear counterpart.

\begin{definition}
Let $\theta_{0, 0} \in L^2_0(\TT^d)$. The \emph{non-linear dissipation time} associated to the advection equation with $p$-Laplacian diffusion \eqref{maineq} is given by
$$
\kappa_d:= \sup_{s \in \R} \left(\inf \left\{ t-s \Bigg|  t \ge s, \ \textrm{and} \ \norm{\theta_s(t)}_2 \le \frac{\norm{\theta_{s, 0}}_2}{\left[\left(p-2 \right)\|\theta_{s, 0}\|^{p-2}_2 +1 \right]^{\frac{1}{p-2}}} \right\} \right), 
$$
where $\theta_{s}(t)$ is the \emph{weak solution} of \eqref{maineq} with initial data $\theta_{s, 0}$. 
\end{definition}

We make several remarks before we proceed. 

\begin{remark}
\begin{enumerate}
\item[1.] One can find that due to the scaling of the p-Laplacian equation, we can not expect the exponential decay for $L^2$ norm of the solution. This is why we can only consider such non-linear dissipation time.

    \medskip
    
    \item [2.] The study of $\kappa_d$ is a little bit subtle compared to $\tau_d$. More precisely, the solutions of the linear advection equation \eqref{20201222eq01} possess strong or even classical solution, if initial data is provided smooth enough; while, to our best knowledge, one cannot expect the existence of classical solution to~\eqref{maineq}, due to the degeneration or singularity at the points where $\abs{\grad\theta}=0$, and hence the solutions of \eqref{maineq} are interpreted only in the weak sense (see, e.g.,\cite{aronson1986porous,dibenedetto2012degenerate}). We will overcome this difficulty and study the corresponding \emph{dissipation enhancement} phenomenon to~\eqref{maineq} by arguing that such weak solutions (see Definition \ref{weaksoldefn}) have certain regularity (see Section \ref{20210223sec01}); 
    
    \medskip
    
    \item [3.] The definition of $\kappa_d$ is indeed consistent with $\tau_d$. For example, by a standard non-linear Gronwall type estimate, one can see that if $u$ is assumed to be incompressible, then 
    \begin{equation} \label{20210222eq01}
    \kappa_d \le \frac{1}{\nu \lambda_1^{\frac{p}{2}}}
    \end{equation}
    (see Corollary \ref{20210223cor01}). Moreover, if $u$ is assumed to be mixing, then our main results (see Theorem \ref{mainthm01} and Theorem \ref{mainthm02}) suggest that 
    $$
    \kappa_d \le \frac{C}{\nu \lambda_N^{\frac{p}{2}}}
    $$
    for some $C$ and $N$ only depending on \eqref{maineq} and the mixing condition. These facts clearly resemble and generalize both \eqref{20201222eq03} and \eqref{20210220eq02};
    
    \medskip
    
    \item [4.] It is not hard to see that the term $\|\theta_s(t)\|_2$ obeys a polynomial decay with the magnitude $(t-s)^{-\frac{1}{p-2}}$ when $t$ is sufficiently large (see, e.g., replace $t_k'$ by $t$ in \eqref{20210121eq02}). Heuristically, the non-linear dissipation time $\kappa_d$ describes quantitatively how large the coefficient of $(t-s)^{\frac{1}{p-2}}$ is in such a decay, under various assumptions on $u$, this is of the same flavor when we consider the linear dissipation time $\tau_d$, which is used to study how large the coefficient of the term $t-s$ is in the exponential decay (see \eqref{20200221eq01}). 
\end{enumerate}
\end{remark}

We are ready to statement our main results. 

\begin{theorem} \label{mainthm01}
Let $0<\alpha \le 1$, $\beta>0$, $p>2$, $\theta_{0, 0} \in L^2_0(\TT^d)$ and $h: [0, \infty) \to (0, \infty)$ be a strictly decreasing function that vanishes at infinity. If $u$ is strongly $\alpha, \beta$ mixing with rate function $h$, then 
\begin{equation} \label{20210121eq13}
\kappa_d \le \frac{C}{\nu H_1(\nu)^{\frac{p}{2}}{\mathcal H}_{1, \nu, h}}, 
\end{equation} 
where $C$ is an absolute constant that only depends on $h$, $\|\nabla u\|_\infty$, $p$, $\|\theta_{0, 0}\|_2$, the strongly mixing condition, any dimension constants and any constants that depends on $\TT^d$, 
\begin{equation} \label{20210121eq15}
{\mathcal H}_{1, \nu, h}:=\min\left\{1, \ 2^{-p-1} \cdot h^{-1} \left( \frac{H_1(\nu)^{-\frac{\alpha+\beta}{2}}}{2^{1-\frac{\alpha+\beta}{2}}} \right)^{\frac{p-2}{2}} \right\}, 
\end{equation} 
and 
$H_1: (0, \infty) \to (0, \infty)$ is defined by 
\begin{equation} \label{20210121eq14} 
H_1(\nu):=\sup \left\{ \lambda \ \Bigg | \ \frac{\lambda^{\frac{p}{2}} d^{\frac{p-2}{2}} D_p \|\theta_{0, 0}\|_2^{p-2}}{h^{-1} \left( \frac{1}{2} \lambda^{-\frac{\alpha+\beta}{2}}\right)} \cdot e^{4\|\nabla u\|_\infty h^{-1} \left(\frac{1}{2} \lambda^{-\frac{\alpha+\beta}{2}} \right)} \le \frac{\|\nabla u\|_\infty^2}{4\nu} \right\}, 
\end{equation}
Here 
\begin{equation} \label{20201224eq04}
D_p:=48^{p-1}p^p2^{p(p-1)}.
\end{equation} 
and $h^{-1}$ is the inverse function of $h$. 
\end{theorem}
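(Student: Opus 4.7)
The plan is to prove \eqref{20210121eq13} via the iteration scheme of Lemma~\ref{20210121lem021}, which substitutes for the semigroup estimates available in the linear setting. I will show that on any time window $[s, s+T]$ of length $T := h^{-1}\bigl(\tfrac{1}{2}H_1(\nu)^{-(\alpha+\beta)/2}\bigr)$---the mixing time at scale $\lambda := H_1(\nu)$---the weak solution of \eqref{maineq} satisfies a one-step nonlinear decay, which translates into an increment of $\|\theta_s(s+T)\|_2^{-(p-2)}$ over $\|\theta_{s,0}\|_2^{-(p-2)}$ of a definite size. Iterating this window-increment and comparing with the dissipation threshold then yields the stated bound on $\kappa_d$.

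Within each window I perform a spectral dichotomy. Let $P_{\le\lambda}$ project onto eigenmodes of $-\lap$ with eigenvalue at most $\lambda$, and set $P_{>\lambda} := I - P_{\le\lambda}$. In the high-frequency regime, where $\|P_{>\lambda}\theta_s\|_2 \ge \tfrac{1}{2}\|\theta_s\|_2$ on a substantial fraction of $[s, s+T]$, the spectral gap gives $\|\nabla\theta_s\|_2^2 \gtrsim \lambda\|\theta_s\|_2^2$, and since $p>2$ and $\TT^d$ has unit volume, Jensen's inequality upgrades this to $\|\nabla\theta_s\|_p^p \gtrsim \lambda^{p/2}\|\theta_s\|_2^p$. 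Feeding this into the energy identity (cf.\ \eqref{e:energy identity1}--\eqref{e:energy identity2}) and integrating in time produces $\|\theta_s(s+T)\|_2^{-(p-2)} - \|\theta_{s,0}\|_2^{-(p-2)} \gtrsim \nu\lambda^{p/2}T$, an increment of the correct shape.

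In the complementary low-frequency regime the strong mixing hypothesis enters. Writing $\tilde\theta(t) := \theta_{s,0}\circ\varphi_{s,t}$ for the purely advected field, \eqref{strongmix} yields $\|P_{\le\lambda}\tilde\theta(s+T)\|_2 \le h(T)\,\lambda^{(\alpha+\beta)/2}\|\theta_{s,0}\|_2 = \tfrac{1}{2}\|\theta_{s,0}\|_2$. The error $w := \theta_s - \tilde\theta$ obeys $\partial_t w + u\cdot\nabla w = \nu\lap_p\theta_s$, and an $L^2$ Gronwall estimate using $\|\nabla u\|_\infty$ on the transport commutator gives $\|w(s+T)\|_2 \lesssim e^{4\|\nabla u\|_\infty T}\nu\int_s^{s+T}\|\lap_p\theta_s(r)\|_2\,dr$. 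The regularity theory of Section~\ref{20210223sec01} controls $\|\lap_p\theta_s\|_2$ by a multiple of $\|\theta_{0,0}\|_2^{p-2}$ and produces the dimensional constant $D_p\,d^{(p-2)/2}$ from \eqref{20201224eq04}. The definition of $H_1(\nu)$ in \eqref{20210121eq14} is engineered precisely so that this error is subdominant to the mixing gain, delivering $\|\theta_s(s+T)\|_2 \le \tfrac{3}{4}\|\theta_{s,0}\|_2$ in this regime.

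Combining the two alternatives produces a uniform per-window decrement; feeding this into Lemma~\ref{20210121lem021} and summing the resulting increments rearranges into the bound \eqref{20210121eq13}, with $\mathcal{H}_{1,\nu,h}$ capturing the weaker of the two contributions. The main obstacle is the comparison step in the low-frequency regime: absent a linear semigroup, $\nu\lap_p\theta_s$ must be controlled in $L^2$ purely through the a priori energy estimates of Section~\ref{20210223sec01} rather than by smoothing, and the exponent $(p-2)/2$ appearing inside $\mathcal{H}_{1,\nu,h}$ reflects the careful bookkeeping needed to convert a single $L^2$ halving into the correct increment of the inverse $(p-2)$-power.
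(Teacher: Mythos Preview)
Your overall architecture matches the paper's: a dichotomy on each time window, a comparison with the inviscid transport solution in the low-frequency alternative, and iteration via Lemma~\ref{20210121lem021}. The gap is in the comparison step.

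You write $\partial_t w + u\cdot\nabla w = \nu\lap_p\theta_s$ and then assert that an $L^2$ Gronwall gives $\|w(s+T)\|_2 \lesssim e^{4\|\nabla u\|_\infty T}\,\nu\int_s^{s+T}\|\lap_p\theta_s(r)\|_2\,dr$, with Section~\ref{20210223sec01} supplying a bound on $\|\lap_p\theta_s\|_2$. Neither claim is available. First, because $u$ is divergence free, the transport part of the $w$-equation conserves $\|w\|_2$; there is no ``transport commutator'' producing the exponential in an $L^2$ estimate (the exponential in the paper's argument comes from somewhere else, see below). Second, and more seriously, Section~\ref{20210223sec01} does \emph{not} put $\lap_p\theta_s$ in $L^2$: Theorem~\ref{regthm} gives only $\theta_s\in{\bf W}$ and $\partial_t\theta_s\in{\bf W}'$, so $\lap_p\theta_s$ lies merely in the dual space ${\bf W}'$. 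The paper flags exactly this obstruction in Remark~\ref{rmk:diff estimate}: the expression $\int\|\nabla\cdot(|\nabla\theta|^{p-2}\nabla\theta)\|_2^2\,dt$ is not well defined for the weak solutions considered here, which is why the sharper approach of \cite{ZelatiDelgadinoElgindi20} cannot be imported.

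What the paper does instead (Lemma~\ref{20210120lem01}) is to test the difference equation with $\omega:=\theta_s-\phi_s$ itself in the weak formulation. This produces the good term $\|\nabla\omega\|_p^p$ on the left, and the cross terms $\langle|\nabla\omega|^{p-2}\nabla\omega-|\nabla\theta_s|^{p-2}\nabla\theta_s,\nabla\omega\rangle$ are bounded via the mean value theorem and Young's inequality by $\|\nabla\omega\|_p^p + D_p\|\nabla\phi_s\|_p^p$, yielding $\partial_t\|\omega\|_2^2\le 2D_p\nu\|\nabla\phi_s\|_p^p$. The exponential factor then enters through a Gronwall estimate on $\|\nabla\phi_s\|_p^p$ for the transport equation, and the final bound reads $\|\omega(t)\|_2^2\lesssim \nu\,e^{2\|\nabla u\|_\infty(t-s)}\|\nabla\theta_{s,0}\|_p^p$. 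This is why the paper's dichotomy is on the size of $\|\nabla\theta_{s,0}\|_p$ relative to $\lambda_N^{1/2}\|\theta_{s,0}\|_2$ (rather than on the spectral content of $\theta_s$ as you propose): the small-gradient hypothesis \eqref{20201224eq05} is precisely what is needed both to close this error bound and to control $\|\theta_{s,0}\|_{\dot H^\alpha}$ by interpolation in the mixing estimate~\eqref{20210120eq01}. With that in hand, the paper shows $\int_s^{t_0}\|\nabla\theta_s\|_2^2\,dr\gtrsim\lambda_N(t_0-s)\|\theta_{s,0}\|_2^2$ and then upgrades to $L^p$ via Jensen, feeding into the energy identity to obtain the one-step decay~\eqref{20201224eq06}.
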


\begin{corollary} \label{strongcor}
Let $\alpha, \beta, u, h, p$ and $\theta_{0, 0}$ be as in Theorem \ref{mainthm01}. Let further, $\nu \ll 1$. Then
\begin{enumerate}
    \item [(1)] If the mixing rate function $h: (0, \infty) \to (0, \infty)$ is the power law
    \begin{equation} \label{powerlaw}
    h(t)=\frac{c}{t^q}
    \end{equation}
    for some $q>0$, then the nonlinear dissipation time is bounded by 
    \begin{equation} \label{corres1}
    \kappa_d \le \frac{C}{\nu \abs{\ln \nu}^{\delta}}, \quad \textrm{where} \ \delta:=\frac{pq}{\alpha+\beta}
    \end{equation}
    and $C=C(\alpha, \beta, c, q, p, d, \|\nabla u\|_{\infty}, \|\theta_{0, 0}\|_2)$ is a finite constant; 
    
    \medskip
    
    \item [(2)] If the mixing rate function $h:[0, \infty) \to (0, \infty)$ is the exponential function 
    \begin{equation} \label{explaw}
    h(t)=c_1\exp(-c_2 t),
    \end{equation} 
    for some constant $c_1, c_2>0$, then the nonlinear dissipation time is bounded by 
    \begin{equation} \label{corres2}
    \kappa_d \le \frac{C}{\nu^{\delta}}, \quad \textrm{where} \ \delta:=\frac{4\|\nabla u\|_{\infty}(\alpha+\beta)}{pc_2+4\|\nabla u\|_{\infty}(\alpha+\beta)}, 
    \end{equation} 
    $C=C(\alpha, \beta, c_1, c_2, p, d, \|\nabla u\|_{\infty}, \|\theta_{0, 0}\|_2)$ is a finite constant. 
\end{enumerate}
\end{corollary}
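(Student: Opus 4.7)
The plan is to apply Theorem~\ref{mainthm01} with the explicit mixing rate functions in hand, carefully extracting the asymptotic behavior of $H_1(\nu)$ as $\nu\to 0^{+}$. A preliminary observation simplifies both cases: the implicit definition \eqref{20210121eq14} forces $H_1(\nu)\to\infty$ as $\nu\to 0^{+}$, so $H_1(\nu)^{-(\alpha+\beta)/2}\to 0$, and since $h^{-1}$ blows up as its argument approaches zero, the outer $h^{-1}$ in \eqref{20210121eq15} is large, giving $\mathcal{H}_{1,\nu,h}=1$ for all sufficiently small $\nu$. The bound \eqref{20210121eq13} therefore reduces to $\kappa_d \le C/(\nu H_1(\nu)^{p/2})$, and the problem becomes producing a lower bound on $H_1(\nu)$ in each case.

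For the power-law case (1), I would first compute $h^{-1}(y)=(c/y)^{1/q}$, so that $h^{-1}\bigl(\tfrac{1}{2}\lambda^{-(\alpha+\beta)/2}\bigr) = (2c)^{1/q}\lambda^{(\alpha+\beta)/(2q)}$. Substituting into \eqref{20210121eq14} yields a constraint of the schematic form
\[
C_1 \lambda^{p/2 - (\alpha+\beta)/(2q)} \exp\!\bigl(C_2 \lambda^{(\alpha+\beta)/(2q)}\bigr) \le \frac{\|\nabla u\|_\infty^{2}}{4\nu}.
\]
Taking logarithms, the exponential factor dominates, so $\lambda^{(\alpha+\beta)/(2q)} \sim |\ln\nu|$, hence the largest admissible $\lambda$ satisfies $H_1(\nu) \ge C|\ln\nu|^{2q/(\alpha+\beta)}$. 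Raising to the $p/2$ power and inserting into the reduced form of \eqref{20210121eq13} gives \eqref{corres1} with $\delta = pq/(\alpha+\beta)$.

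For the exponential case (2), I compute $h^{-1}(y)=(1/c_2)\ln(c_1/y)$, so that $h^{-1}\bigl(\tfrac{1}{2}\lambda^{-(\alpha+\beta)/2}\bigr) = (1/c_2)\ln\bigl(2c_1\lambda^{(\alpha+\beta)/2}\bigr)$. The inequality defining $H_1(\nu)$ becomes, up to logarithmic corrections absorbed into the constants,
\[
\lambda^{p/2 + 2\|\nabla u\|_\infty(\alpha+\beta)/c_2} \le \frac{C}{\nu},
\]
which solves to $H_1(\nu) \ge C\nu^{-2/(p + 4\|\nabla u\|_\infty(\alpha+\beta)/c_2)}$. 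A direct algebraic manipulation shows that $1 - p/(p+X) = X/(p+X)$ with $X=4\|\nabla u\|_\infty(\alpha+\beta)/c_2$ equals the exponent $\delta$ of \eqref{corres2}, so $\nu H_1(\nu)^{p/2} \ge C\nu^{\delta}$ and the desired bound follows.

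The main obstacle is the implicit, coupled nature of \eqref{20210121eq14}, which mixes a polynomial factor in $\lambda$, a denominator $h^{-1}$, and an exponential in $h^{-1}$: one has to identify which piece governs the balance in each regime (the exponential in (1), producing logarithmic-in-$\nu$ behavior; the polynomial in (2), producing fractional-polynomial-in-$\nu$ behavior) and absorb all logarithmic corrections into the universal constants. Once this balancing is correctly performed, the rest of the argument is bookkeeping the dependence of the constants on $p$, $\alpha$, $\beta$, the mixing parameters, $d$, $\|\nabla u\|_\infty$ and $\|\theta_{0,0}\|_2$.
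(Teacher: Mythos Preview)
Your proposal is correct and follows essentially the same approach as the paper: observe that $\mathcal{H}_{1,\nu,h}=1$ for small $\nu$, then compute $H_1(\nu)$ explicitly for each rate function (yielding $H_1(\nu)\sim|\ln\nu|^{2q/(\alpha+\beta)}$ in the power-law case and $H_1(\nu)\sim\nu^{-2c_2/(pc_2+4\|\nabla u\|_\infty(\alpha+\beta))}$ in the exponential case) and substitute into \eqref{20210121eq13}. Your account is in fact more detailed than the paper's proof, which simply states the resulting values of $H_1(\nu)$ without spelling out the balancing argument.
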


\begin{remark}
 The strong mixing estimate \eqref{strongmix} has been used by many authors in their research (see  \cites{MathewMezicEA05,LinThiffeaultEA11,Thiffeault12,IyerKiselevEA14,ZelatiDelgadinoElgindi20}). As an example, an exponential universal mixer with regularity $L^{\infty}(0,\infty;W^{1,r})$ for some $r>2$ is constructed by Elgindi and Zlato\v{s} in \cite{ElgindiZlatos19} (see Section 4 of \cite{ZelatiDelgadinoElgindi20} for a more detailed discussion). Some of such flows can be taken to be smooth locally in time (e.g., the flow in \cite{YaoZlatos17}). That means for any $T<\infty$, one could find such flow in the space $L^{\infty}(0,T;W^{1,\infty}(\T^d))$. However, the velocity field $u\in L^{\infty}(0,\infty;W^{1,\infty}(\T^d))$ that mixes arbitrary smooth initial date $f$ exponentially fast remains an open problem. There are two kinds of flows in $L^{\infty}(0,\infty; W^{1,\infty})$ with such mixing property, one is from \cite{YaoZlatos17}, with the flow depending on the solution, another one are so-called relaxation enhancing flow, defined in \cite{ConstantinKiselevEA08}, which are time-independent but without explicit rate $h(t)$, one may find more concrete examples in it. And in \cite{AlbertiCrippaEA19}, the authors construct smooth flows that can mix some specific initial data exponentially fast. However, it cannot be applied to our main theorem, which requires exponential mixing flow for arbitrary initial data.
\end{remark}

\begin{theorem} \label{mainthm02}
Let $0<\alpha \le 1$, $\beta>0$, $p>2$, $\theta_{0, 0} \in L^2_0(\TT^d)$ and $h: [0, \infty) \to (0, \infty)$ be a strictly decreasing function that vanishes at infinity. If $u$ is weakly $\alpha, \beta$ mixing with rate function $h$, then 
\begin{equation} \label{20210121eq13new}
\kappa_d \le \frac{C}{\nu H_2(\nu)^{\frac{p}{2}}{\mathcal H}_{2, \nu, h}}, 
\end{equation} 
where $C$ is an absolute constant that only depends on $h$, $\|\nabla u\|_\infty$, $p$, $\|\theta_{0, 0}\|_2$, the weakly mixing condition, any dimension constants and any constants that depends on $\TT^d$, 
\begin{equation} \label{20210121eq15new}
{\mathcal H}_{2, \nu, h}:=\min\left\{1, \ 2^{-p-1} \cdot h^{-1} \left( \frac{H_2(\nu)^{-\frac{d+2\alpha+2\beta}{4}}}{2^{1-\frac{d+2\alpha+2\beta}{4}}} \right)^{\frac{p-2}{2}} \right\}. 
\end{equation} 
Here $H_2: (0, \infty) \to (0, \infty)$ is defined by 
\begin{equation} \label{20210121eq14new} 
H_2(\mu):=\sup \left\{ \lambda \ \Vast | \ \frac{\lambda^{\frac{p}{2}} d^{\frac{p-2}{2}} D_p \|\theta_{0, 0}\|_2^{p-2}}{h^{-1} \left( \frac{\lambda^{-\frac{d+2\alpha+2\beta}{4}}}{2 \sqrt{\mathfrak{c}}}\right)} \cdot e^{4\|\nabla u\|_\infty h^{-1} \left(\frac{\lambda^{-\frac{d+2\alpha+2\beta}{4}}}{2 \sqrt{\mathfrak{c}}}\right)} \le \frac{\|\nabla u\|_\infty^2}{4\mu} \right\}, 
\end{equation}
where $D_p$ is defined as in \eqref{20201224eq04} and $\mathfrak{c}=\mathfrak{c}(\TT^d)$ is a finite constant that only depends on $\TT^d$.
\end{theorem}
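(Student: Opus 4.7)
My plan is to mirror the proof of the strong mixing case (Theorem \ref{mainthm01}) and modify only the step where the mixing hypothesis enters, replacing the pointwise-in-time bound \eqref{strongmix} with a pointwise bound extracted from the time-averaged estimate \eqref{weakmix}. The architecture is the following: fix an initial time $s$ and choose a time increment $\tau>0$ (to be matched to $h^{-1}$ of a suitable frequency scale); on the interval $[s,s+\tau]$ I will split $\theta_s$ into its projection $P_{\le N}\theta_s$ onto frequencies below some cutoff $\lambda_N$ and its complement $P_{>N}\theta_s$. For the high-frequency part the energy identity \eqref{e:energy identity1}/\eqref{e:energy identity2} combined with the nonlinear Poincar\'e-type inequality $\|\grad\theta\|_p^p\gtrsim \lambda_N^{p/2}\|\theta\|_2^p$ (applied to the high-frequency projection) gives the decay driven by $\nu\lambda_N^{p/2}$; for the low-frequency part the mixing hypothesis is used to show that the flow transports most of the mass to high frequencies within time $\tau$, after which diffusion can absorb it. Once a single-step decay is established, the iteration Lemma \ref{20210121lem021} drives the $L^2$ norm past the threshold defining $\kappa_d$ in a controlled number of steps.

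The key new ingredient is turning the time-averaged weak-mixing bound \eqref{weakmix} into the pointwise-in-time statement that the iteration scheme requires. Because the iteration compares $\|\theta_s(t_{k+1})\|_2$ with $\|\theta_s(t_k)\|_2$ at specific endpoints, I need, for each time $t$ in a short window, a bound of the form
\begin{equation*}
|\langle\theta_{s,0}\circ\varphi_{s,t}, g\rangle|\le \widetilde h(t-s)\,\|\theta_{s,0}\|_{\dot H^{\alpha+d/4}}\,\|g\|_{\dot H^{\beta+d/4}}.
\end{equation*}
The standard device for this is a Sobolev interpolation on $\TT^d$ (hence the dimensional constant $\mathfrak{c}=\mathfrak{c}(\TT^d)$ in the statement): writing $a(t)=\langle\theta_{s,0}\circ\varphi_{s,t},g\rangle$ and controlling $|a(t)|$ at a point by its $L^2_t$ average plus a time-derivative correction that costs a further $d/4$ derivatives in each of $f$ and $g$ (an integration-by-parts against $u\cdot\grad$ together with the embedding $\dot H^{d/4}\hookrightarrow L^4$ on $\TT^d$), I arrive at a pointwise estimate with Sobolev exponents $\alpha+d/4$ and $\beta+d/4$. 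This is exactly the origin of the shift from $\frac{\alpha+\beta}{2}$ in Theorem \ref{mainthm01} to $\frac{d+2\alpha+2\beta}{4}$ appearing in $H_2$ and in $\mathcal{H}_{2,\nu,h}$.

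With this pointwise weak-mixing estimate in hand, the remainder of the argument is a verbatim repetition of the strong-mixing proof. At each iteration step I choose the frequency cutoff $\lambda_N=H_2(\nu)$ so that the inequality defining $H_2$ in \eqref{20210121eq14new} balances the nonlinear dissipation rate $\nu\lambda_N^{p/2}\|\theta_{0,0}\|_2^{p-2}$ against the mixing rate $h^{-1}(\lambda_N^{-(d+2\alpha+2\beta)/4}/(2\sqrt{\mathfrak{c}}))$ and the transport distortion factor $e^{4\|\nabla u\|_\infty h^{-1}(\cdot)}$ (the latter arising, as in Theorem \ref{mainthm01}, from the $W^{1,\infty}$ growth of $\varphi_{s,t}$ over the time window). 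The auxiliary factor $\mathcal{H}_{2,\nu,h}$ from \eqref{20210121eq15new} then quantifies the contribution of the single-step decay within the polynomial regime dictated by the $p$-Laplacian scaling, and the iteration lemma closes the estimate to yield \eqref{20210121eq13new}.

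The main obstacle, as above, is the passage from the $L^2$-in-time weak-mixing bound to a pointwise one compatible with the iteration; this is where dimension $d$ enters and where the Sobolev regularity established in Section \ref{20210223sec01} is essential, since I must legitimately differentiate $a(t)=\langle\theta_{s,0}\circ\varphi_{s,t},g\rangle$ against only weakly regular data. A secondary technical point is verifying that the constant $\mathfrak{c}=\mathfrak{c}(\TT^d)$ from the interpolation step can be absorbed into the universal constant $C$ without affecting the rate, which requires checking that it depends only on $d$ and the geometry of the torus and not on $\nu$ or on the solution itself.
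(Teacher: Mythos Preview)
Your overall architecture—mirror the proof of Theorem \ref{mainthm01} and modify only the place where the mixing hypothesis enters—is exactly what the paper does. But you misidentify \emph{where} that place is, and your proposed workaround is both unnecessary and, as written, not correct.

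You claim the iteration needs a pointwise-in-time mixing bound, and so you propose to upgrade the time-averaged estimate \eqref{weakmix} to a pointwise one at the cost of $d/4$ extra derivatives on each side, via ``$\dot H^{d/4}\hookrightarrow L^4$ on $\TT^d$.'' This mechanism does not do what you say: the embedding $\dot H^{d/4}\hookrightarrow L^4$ is spatial, while the deficit you are trying to repair is temporal (passing from an $L^2_t$ average to an $L^\infty_t$ value). The time derivative of $a(t)=\langle f\circ\varphi_{s,t},g\rangle$ is $\langle f\circ\varphi_{s,t},\,u\!\cdot\!\nabla g\rangle$, which costs a full spatial derivative on one side, not $d/4$ on each; I do not see how to obtain your claimed splitting. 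Your identification of $\mathfrak{c}$ as a Sobolev-interpolation constant is also off: in the paper $\mathfrak{c}$ is the Weyl constant \eqref{20210224eq100}.

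The paper's route is simpler and avoids the issue entirely. In the proof of Lemma \ref{20210121lem02}, the mixing hypothesis is used only to bound the \emph{time integral} $\int_{(t_0+s)/2}^{t_0}\|P_N\phi_s(r)\|_2^2\,dr$; the endpoint control of $\|\theta_s(t_0)\|_2$ then comes from the energy identity \eqref{20210120eq06}, not from any pointwise mixing estimate. In the weak case (Lemma \ref{20210224lem01}) one expands $\|P_N\phi_s(r)\|_2^2=\sum_{\ell\le N}|\langle\phi_s(r),e_\ell\rangle|^2$ and applies \eqref{weakmix} termwise to each mode; the sum over $\ell\le N$ produces an extra factor $N$, and Weyl's formula converts $N$ into $\mathfrak{c}\,\lambda_N^{d/2}$. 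That is the true origin of the shift from $\frac{\alpha+\beta}{2}$ to $\frac{d+2\alpha+2\beta}{4}$, and everything else in the proof of Theorem \ref{mainthm01} carries over verbatim. So drop the pointwise-conversion step and apply weak mixing directly to the already time-integrated low-frequency term.
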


\begin{corollary} \label{weakcor}
Let $\alpha, \beta, u, h, p$ and $\theta_{0, 0}$ be as in Theorem \ref{mainthm01}. Let further, $\nu \ll 1$. Then \begin{enumerate}
    \item [(1)] If the mixing rate function $h: (0, \infty) \to (0, \infty)$ is the power law
    \eqref{powerlaw},  then the nonlinear dissipation time is bounded by 
    \begin{equation} \label{corres3}
    \kappa_d \le \frac{C}{\nu \abs{\ln \nu}^{\delta}}, \quad \textrm{where} \ \delta:=\frac{2pq}{2\alpha+2\beta+d}
    \end{equation}
    and $C=C(\alpha, \beta, c, p, q,  d, \|\nabla u\|_{\infty}, \|\theta_{0, 0}\|_2)$ is a finite constant; 
    
    \medskip
    
    \item [(2)] If the mixing rate function $h:[0, \infty) \to (0, \infty)$ is the exponential \eqref{explaw}, then the nonlinear dissipation time is bounded by 
    \begin{equation} \label{corres4}
    \kappa_d \le \frac{C}{\nu^{\delta}}, \quad \textrm{where} \ \delta:=\frac{2\|\nabla u\|_{\infty}(d+2\alpha+2\beta)}{pc_2+ 2\|\nabla u\|_{\infty}(d+2\alpha+2\beta)}
    \end{equation} 
    and $C=C(\alpha, \beta, c_1, c_2, p, d, \|\nabla u\|_{\infty}, \|\theta_{0, 0}\|_2)$ is a finite constant. 
\end{enumerate}
\end{corollary}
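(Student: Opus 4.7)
The plan is to derive both statements by direct substitution of the specific rate functions $h$ into the quantitative bound $\kappa_d \le C \nu^{-1} H_2(\nu)^{-p/2} \mathcal{H}_{2,\nu,h}^{-1}$ of Theorem \ref{mainthm02}, and then to perform an asymptotic analysis of the implicitly-defined quantity $H_2(\nu)$ as $\nu \to 0^+$. The first observation is that the cutoff factor $\mathcal{H}_{2,\nu,h}$ may be discarded in the small-$\nu$ regime: the inequality \eqref{20210121eq14new} forces $H_2(\nu) \to \infty$ as $\nu \to 0^+$, so the argument of $h^{-1}$ in \eqref{20210121eq15new} tends to $0^+$, hence $h^{-1}$ of that argument blows up, and so the minimum in the definition of $\mathcal{H}_{2,\nu,h}$ is eventually attained at $1$. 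Setting $\gamma := (d+2\alpha+2\beta)/4$ for brevity, the supremum \eqref{20210121eq14new} defining $H_2(\nu)$ is then over $\lambda$ satisfying
\begin{equation*}
\lambda^{p/2}\cdot\frac{\exp\bigl(4\|\nabla u\|_\infty\, h^{-1}(\lambda^{-\gamma}/(2\sqrt{\mathfrak{c}}))\bigr)}{h^{-1}\bigl(\lambda^{-\gamma}/(2\sqrt{\mathfrak{c}})\bigr)}\;\le\;\frac{C}{\nu},
\end{equation*}
and the task reduces to inverting this relation under each of the two assumed forms of $h$.

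For part (1), the power-law case $h(t) = c/t^q$, I would compute $h^{-1}(y) = (c/y)^{1/q}$, so that $h^{-1}(\lambda^{-\gamma}/(2\sqrt{\mathfrak{c}})) = C_1 \lambda^{\gamma/q}$ for an explicit constant $C_1$. The exponential factor then dominates the polynomial factors, and balancing
\begin{equation*}
\lambda^{p/2}\exp\bigl(C_2 \lambda^{\gamma/q}\bigr) \;\sim\; \frac{1}{\nu}
\end{equation*}
forces $\lambda^{\gamma/q} \sim |\ln\nu|$, hence $H_2(\nu)\gtrsim |\ln\nu|^{q/\gamma}$ for $\nu\ll 1$. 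Therefore
\begin{equation*}
H_2(\nu)^{p/2}\;\gtrsim\;|\ln\nu|^{pq/(2\gamma)} \;=\; |\ln\nu|^{2pq/(d+2\alpha+2\beta)},
\end{equation*}
which combined with the reduction above yields \eqref{corres3}.

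For part (2), the exponential case $h(t) = c_1 e^{-c_2 t}$, I would invert to get $h^{-1}(y) = c_2^{-1}\ln(c_1/y)$, so that $h^{-1}(\lambda^{-\gamma}/(2\sqrt{\mathfrak{c}})) = (\gamma/c_2)\ln\lambda + O(1)$. The key point is that the exponential factor now collapses to a polynomial in $\lambda$: $\exp\bigl(4\|\nabla u\|_\infty h^{-1}(\cdot)\bigr) \lesssim \lambda^{4\gamma\|\nabla u\|_\infty/c_2}$. The defining inequality, up to a $\ln\lambda$ correction in the denominator that only contributes to constants, becomes
\begin{equation*}
\lambda^{p/2 + 4\gamma\|\nabla u\|_\infty/c_2}\;\lesssim\;\frac{1}{\nu},
\end{equation*}
so $H_2(\nu) \gtrsim \nu^{-2c_2/(pc_2+8\gamma\|\nabla u\|_\infty)}$. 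Raising to the $p/2$ and combining with the $\nu^{-1}$ prefactor gives $\kappa_d \le C\nu^{-\delta}$ with
\begin{equation*}
\delta \;=\; 1 - \frac{pc_2}{pc_2+8\gamma\|\nabla u\|_\infty} \;=\; \frac{2(d+2\alpha+2\beta)\|\nabla u\|_\infty}{pc_2 + 2(d+2\alpha+2\beta)\|\nabla u\|_\infty},
\end{equation*}
matching \eqref{corres4}. The main technical obstacle is careful bookkeeping: the lower-order logarithmic corrections and the various constants depending on $d, p, \alpha, \beta, c_1, c_2, \|\theta_{0,0}\|_2,\|\nabla u\|_\infty$ and $\mathfrak{c}$ all propagate through the implicit definition of $H_2(\nu)$ and must be tracked to confirm they do not affect the extracted asymptotic exponent $\delta$, only the prefactor $C$.
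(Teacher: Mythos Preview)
Your proposal is correct and follows exactly the approach the paper intends: the paper explicitly states that the proof of Corollary \ref{weakcor} ``only involves computing $H_2$ explicitly when $h$ is assumed to be power law or exponential'' and leaves the details to the reader, in direct analogy with the proof of Corollary \ref{strongcor}. You have carried out precisely this computation, correctly observing that $\mathcal{H}_{2,\nu,h}\equiv 1$ for $\nu\ll 1$ and then inverting the defining relation \eqref{20210121eq14new} asymptotically for each choice of $h$; the resulting exponents match \eqref{corres3} and \eqref{corres4}.
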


\begin{remark} \label{20210224rem01}
We will see in the proof of Theorem \ref{mainthm02} that the constant $\mathfrak{c}$ can be determined by Weyl's formula, which states
     \begin{equation} \label{Weylfor}
     \lambda_j \simeq \frac{4\pi \Gamma\left(\frac{d}{2}+1 \right)^{\frac{2}{d}}}{\textrm{Vol}(\TT^d)^{\frac{2}{d}}} j^{\frac{2}{d}}
     \end{equation}
     asymptotically as $j \to \infty$ (see, e.g., \cite{MinakshisundaramPleijel49}). 
Then, for example, we can take
\begin{equation} \label{20210224eq100}
\mathfrak{c}:=(1+\varepsilon) \lim_{j \to \infty} \frac{j}{\lambda_j^{\frac{d}{2}}}=\frac{(1+\varepsilon) \textrm{Vol}(\TT^d)}{\left(4 \pi\right)^{\frac{d}{2}} \Gamma\left(\frac{d}{2}+1 \right)}, 
\end{equation} 
for some fixed $\varepsilon$ which is sufficiently small. 
\end{remark}

\begin{remark}
\begin{enumerate}

    \item [1.] One crucial fact to prove the dissipation enhancement results for the linear advection equation is to make use of the iteration structure for the exponential functions. While for the proof of Theorem \ref{mainthm01} and Theorem \ref{mainthm02}, one novelty is that we will adapt new iteration structure given by certain rational functions (see Lemma \ref{20210121lem02}); 
    
    \medskip

    \item [2.] Note that if we let $p=2$ in both Theorem \ref{mainthm01} and Theorem \ref{mainthm02}, 
    $$
    H_{1, v, h}=H_{2, v, h}=\frac{1}{8}
    $$ 
    and our main improvements \eqref{20210121eq13} and \eqref{20210121eq13new}, respectively will ``partially recover'' the improved dissipation time $\tau_d$ for the linear model \eqref{20201222eq01} with $\alpha=1$ under both strongly mixing condition and weakly mixing condition, respectively (see, e.g., \cite[Theorem 2.16 and Theorem 2.19]{FengIyer19}). So are both Corollary \ref{strongcor} and Corollary \ref{weakcor} (see, e.g., \cite[Corollary 2.17 and Corollary 2.20]{FengIyer19}).
    
    \medskip
    
    \item [3.] Observe that when $\nu$ is sufficiently small, ${\mathcal H}_{1, \nu, h}={\mathcal H}_{2, \nu, h} \equiv 1$. Therefore, for $\nu$ sufficiently small, the estimates \eqref{20210121eq13} and \eqref{20210121eq13new} can be simplified as
    $$
    \kappa_d \le \frac{C}{\nu H_i(\nu)^{\frac{p}{2}}}, \quad i=1,2.  
    $$
\end{enumerate}
\end{remark}


\section{Preliminaries} \label{20210223sec01} 

This section aims to introduce the regularity of the weak solutions to equation \eqref{maineq}. As a classical degenerate second-order parabolic equation (see \cite{dibenedetto2012degenerate} for the guidance of the theory), there is extensive literature devoted to \eqref{maineq}. We limit ourselves by referring to the review paper \cite{Kalashnikov1987SomePO} for providing profound insights to \eqref{maineq} and the paper \cite{AntontsevShmarev09}, whose setting is relatively close to ours. In \cite{AntontsevShmarev09}, the authors studied the Dirichlet problem to \eqref{maineq} under a more general setting. To be self-contained, we will adapt their results to ours. Consequently, we show that the weak solution $\theta_s$ satisfies specific energy inequality associated with \eqref{maineq}, which further implies the trivial estimate \eqref{20210222eq01} for $\kappa_d$.

We start with recalling several basic definitions. Following the notations in \cite{AntontsevShmarev09}, let ${\bf V}(\TT^d)$ be the Banach space given by
$$
{\bf V}\left(\TT^d \right):=\left\{g(x): g \in L^2(\TT^d) \cap W^{1, p}(\TT^d)\right\},
$$
and for any $T>0$, we define the Banach space ${\bf W}\left(\TT^d \times [0, T] \right)$ as 
\begin{eqnarray*}
{\bf W}\left(\TT^d \times [0, T] \right)%
&:=& \bigg\{f: [0, T] \mapsto {\bf V}(\TT^d): f \in L^2\left(\TT^d \times [0, T] \right), \\
&& \qquad \qquad \qquad \qquad \qquad\ |D_i f|^p \in L^1 \left(\TT^d \times [0, T] \right)\bigg\},
\end{eqnarray*}
with the norm 
$$
\|f\|_{{\bf W}\left(\TT^d \times [0, T] \right)}:=\|f\|_{L^2\left(\TT^d \times (0, T)\right)}+\sum_{i=1}^d \|D_i f\|_{L^p\left(\TT^d \times (0, T)\right)}.
$$
here $p>2$ is associated with \eqref{maineq}. 

Finally, we let ${\bf W'} \left(\TT^d \times [0, T] \right)$ be the dual space of ${\bf W} \left(\TT^d \times [0, T] \right)$, that is, the space of all linear functionals over ${\bf W}\left(\TT^d \times [0, T] \right)$. Note that $w \in {\bf W'}\left(\TT^d \times [0, T] \right)$ if and only if
$$
\begin{cases}
w=w_0+\sum\limits_{i=1}^d D_i w_i, \quad w_0 \in L^2 \left(\TT^d \times [0, T] \right), \ w_i \in L^{p'} \left(\TT^d \times [0, T] \right); \\
\\
\forall \phi \in {\bf W}\left(\TT^d \times [0, T] \right), \quad \langle w, \phi \rangle_{\bf W}:=\int_0^T \int_{\TT^d} \left(w_0 \phi+\sum\limits_{i=1}^d w_i D_i \phi \right) dxdt.
\end{cases}
$$

\begin{definition} \label{weaksoldefn}
Given $T>0$, a function $\theta(x, t) \in {\bf W}\left(\TT^d \times [0, T] \right) \cap L^\infty\left(0, T; L^2(\TT^d) \right)$ is called a \emph{weak solution} of \eqref{maineq} if for every test function
$$
\zeta \in {\bf Z}:= \left\{ \eta(z): \eta \in {\bf W}\left( \TT^d \times [0, T] \right) \cap L^\infty \left(0, T; L^2(\TT^d) \right), \ \eta_t \in {\bf W'}\left(\TT^d \times [0, T] \right) \right\},   
$$
and every $t_1, t_2 \in [0, T]$, the following identity holds
\begin{equation} \label{weaksolid}
\int_{t_1}^{t_2} \int_{\TT^d} \sum_{i=1}^d\left[\theta \zeta_t-\left(\nu |D_i \theta|^{p-2} D_i \theta - u_i(t) \cdot \theta \right) D_i \zeta \right] dxdt=\int_{\TT^d} \theta \zeta dx \bigg|_{t_1}^{t_2}. 
\end{equation}
\end{definition}

\begin{theorem}[{\cite[Theorem 3.1]{AntontsevShmarev09}}] \label{regthm}
For every $s \ge 0$ and $\theta_{0, 0} \in L_0^2(\TT^d)$, the problem \eqref{maineq} has at least one weak solution $\theta_s \in  {\bf W}\left(\TT^d \times [s, T] \right)$ satisfying the estimate
\begin{equation}\label{e:energy identity1}
\|\theta_s\|_{L^\infty(s, T; L^2(\TT^d))}^2+\nu\int_s^T \int_{\TT^d }\sum_{i=1}^d\abs{D_i \theta_s}^p dxdt \le C \left(\|\theta_{0, 0}\|_{L^2(\TT^d)}^2+1 \right), 
\end{equation}
where $C$ is an absolute constant independent of $\theta_s$. Moreover, $\frac{d}{dt} \theta_s \in {\bf W'}\left(\TT^d \times [s, T]\right)$. 
\end{theorem}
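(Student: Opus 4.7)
The plan is to construct $\theta_s$ as the limit of a two-parameter approximation and then extract \eqref{e:energy identity1} by weak lower semicontinuity. First I would regularize the degenerate flux: set $a_\varepsilon(\xi):=(|\xi|^2+\varepsilon)^{(p-2)/2}\xi$, which is smooth and strictly monotone, and replace $|\nabla\theta|^{p-2}\nabla\theta$ by $a_\varepsilon(\nabla\theta)$ in \eqref{maineq}. I would then discretize in space by a Galerkin scheme on the span of the first $N$ eigenfunctions of $-\Delta$ on $\TT^d$. Standard ODE theory together with the a priori bound below produces smooth approximants $\theta_s^{\varepsilon,N}$ on $[s,T]$ for any $T>s$.

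For the energy bound, test the Galerkin equation against $\theta_s^{\varepsilon,N}$ itself. Since $\nabla\cdot u=0$, the advection term vanishes, $\int_{\TT^d}(u\cdot\nabla\theta_s^{\varepsilon,N})\theta_s^{\varepsilon,N}\,dx=0$, leaving
\begin{equation*}
\tfrac{1}{2}\tfrac{d}{dt}\|\theta_s^{\varepsilon,N}\|_2^2+\nu\int_{\TT^d} a_\varepsilon(\nabla\theta_s^{\varepsilon,N})\cdot\nabla\theta_s^{\varepsilon,N}\,dx=0.
\end{equation*}
Because $a_\varepsilon(\xi)\cdot\xi\ge|\xi|^p$ and, for $p>2$, $|\nabla\theta|^p$ is equivalent to $\sum_{i=1}^d|D_i\theta|^p$ up to a dimensional constant, integration on $[s,T]$ yields
\begin{equation*}
\|\theta_s^{\varepsilon,N}\|_{L^\infty(s,T;L^2)}^2+\nu\sum_{i=1}^d\int_s^T\!\!\int_{\TT^d}|D_i\theta_s^{\varepsilon,N}|^p\,dx\,dt\le C\bigl(\|\theta_{0,0}\|_2^2+1\bigr),
\end{equation*}
uniformly in $\varepsilon,N$. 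The fluxes $a_\varepsilon(\nabla\theta_s^{\varepsilon,N})$ and $u\,\theta_s^{\varepsilon,N}$ are then controlled in $L^{p'}_{t,x}$ and $L^2_{t,x}$ respectively, which by duality pins down $\partial_t\theta_s^{\varepsilon,N}$ in ${\bf W'}(\TT^d\times[s,T])$.

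Now I would extract weak-$\ast$ limits in $L^\infty(s,T;L^2)$ and weak limits in ${\bf W}(\TT^d\times[s,T])$ along a subsequence. The Aubin--Lions lemma promotes these to strong convergence in $L^2(\TT^d\times(s,T))$, which is enough to pass to the limit in the linear terms of the weak formulation \eqref{weaksolid}. The main obstacle is identifying the weak limit $\chi$ of $a_\varepsilon(\nabla\theta_s^{\varepsilon,N})$ with $|\nabla\theta_s|^{p-2}\nabla\theta_s$, since strong $L^2$ convergence of $\theta_s^{\varepsilon,N}$ gives no pointwise control on $\nabla\theta_s^{\varepsilon,N}$. I would close this gap by the Minty--Browder monotonicity trick: for any $\phi\in{\bf W}$ the inequality
\begin{equation*}
\int_s^T\!\!\int_{\TT^d}\bigl(a_\varepsilon(\nabla\theta_s^{\varepsilon,N})-a_\varepsilon(\nabla\phi)\bigr)\cdot\bigl(\nabla\theta_s^{\varepsilon,N}-\nabla\phi\bigr)\,dx\,dt\ge 0
\end{equation*}
combined with the energy identity (which rewrites $\int\partial_t\theta\cdot\theta$ as a boundary $L^2$-norm difference that is lower semicontinuous under weak convergence) passes to the limit and, after the hemicontinuity replacement $\phi\leadsto\theta_s\pm\lambda\psi$ with $\lambda\downarrow 0$, yields $\chi=|\nabla\theta_s|^{p-2}\nabla\theta_s$. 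The estimate \eqref{e:energy identity1} then follows from weak lower semicontinuity applied to the uniform Galerkin bound, and $\partial_t\theta_s\in{\bf W'}$ is read off \eqref{weaksolid} once both fluxes lie in their respective dual spaces.
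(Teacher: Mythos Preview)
The paper does not prove Theorem~\ref{regthm} itself; it is quoted from \cite[Theorem~3.1]{AntontsevShmarev09}, and the authors explicitly write ``For simplicity we omit the regularity proofs here'' (Remark~\ref{rmk:uniqueness}). Your outline---$\varepsilon$-regularization of the degenerate flux, Galerkin approximation in the Laplace eigenbasis, uniform energy bounds from testing against the approximant (the advection term drops by incompressibility), Aubin--Lions compactness, and the Minty--Browder monotonicity trick to identify the weak limit of the nonlinear flux---is the standard route for monotone degenerate parabolic problems and is correct; it is essentially the scheme carried out in the cited reference, there in the more general anisotropic variable-exponent setting.

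One small point worth tightening: your Galerkin system is initialized at $P_N\theta_{s,0}$, so the energy bound you derive is naturally in terms of $\|\theta_{s,0}\|_2$ rather than $\|\theta_{0,0}\|_2$. To match the stated estimate \eqref{e:energy identity1} you should first run the argument for $s=0$, then observe (from the energy identity itself) that $\|\theta_{s,0}\|_2=\|\theta_0(s)\|_2\le\|\theta_{0,0}\|_2$ for $s>0$; otherwise the constant on the right would a priori depend on $s$.
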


\begin{remark}\label{rmk:uniqueness}
From Section 5 of \cite{AntontsevShmarev09} or Chapter 6 of \cite{dibenedetto2012degenerate}, one also has the uniqueness of the weak solution in Theorem \ref{regthm}. For simplicity we omit the regularity proofs here.
\end{remark}
Note that Theorem \ref{regthm} and Remark \ref{rmk:uniqueness} suggests \eqref{maineq} admits a unique weak solution, which belongs to the test function space ${\bf Z}$. As some byproducts, we collect several useful facts about \eqref{maineq}.

\begin{corollary} \label{20210223cor01}
\begin{enumerate}
    \item [1.] If $\theta_{0, 0} \in L^2_0(\TT^d)$, then so does  $\theta_{s, 0}:=\theta_0(s)$; 
    
    \medskip
    
    \item [2.] For any $\zeta \in {\bf Z}$, there holds that
    \begin{equation} \label{weakexact}
    \int_{\TT^d} \sum_{i=1}^d\left[\frac{ \partial \theta_s}{\partial t} \cdot \zeta+\left(\nu |D_i \theta_s|^{p-2} D_i \theta_s - u_i(t) \cdot \theta_s \right) \cdot D_i \zeta \right] dx=0;
    \end{equation} 
    
    \medskip
    
    \item [3.] The following estimate holds
    $$
    \kappa_d \le \frac{1}{\nu \lambda_1^{\frac{p}{2}}}; 
    $$

\end{enumerate}
\end{corollary}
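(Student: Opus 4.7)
My plan is to prove the three items in order, each time invoking the weak formulation \eqref{weaksolid} from Theorem \ref{regthm} with a cleverly chosen test function from $\mathbf{Z}$. For part 1, I would test \eqref{weaksolid} against $\zeta\equiv 1$, a bona fide element of $\mathbf{Z}$ whose derivatives all vanish identically. The identity then collapses to the statement that $t\mapsto \int_{\TT^d}\theta_0(t)\,dx$ is constant, so that $\int_{\TT^d}\theta_0(s)\,dx = \int_{\TT^d}\theta_{0,0}\,dx=0$ for every $s\ge 0$; the $L^2$-integrability of $\theta_{s,0}=\theta_0(s)$ is immediate from the energy inequality \eqref{e:energy identity1}.

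For part 2, the task is to upgrade the integrated identity \eqref{weaksolid} to the instantaneous one \eqref{weakexact}. Since Theorem \ref{regthm} provides $\theta_s\in\mathbf{W}$ and $\partial_t\theta_s\in\mathbf{W}'$, the duality pairing $\langle\partial_t\theta_s,\zeta\rangle_{\mathbf{W}',\mathbf{W}}$ is well defined for every $\zeta\in\mathbf{Z}$, and a standard chain-rule identity in this evolution-triple setting (obtained by time mollification) gives
$$
\int_{\TT^d}\theta_s\,\zeta\,dx\bigg|_{t_1}^{t_2}=\int_{t_1}^{t_2}\langle\partial_t\theta_s,\zeta\rangle\,dt+\int_{t_1}^{t_2}\int_{\TT^d}\theta_s\,\zeta_t\,dx\,dt.
$$
Substituting this into \eqref{weaksolid} and then applying Lebesgue differentiation in $t$ produces \eqref{weakexact} at almost every time.

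For part 3, I would test \eqref{weakexact} with $\zeta=\theta_s$, which lies in $\mathbf{Z}$ by Theorem \ref{regthm}. The transport contribution $\int_{\TT^d}u\cdot\nabla(\theta_s^2/2)\,dx$ vanishes by incompressibility, leaving the energy identity $\tfrac{1}{2}\partial_t\|\theta_s\|_2^2+\nu\int_{\TT^d}|\nabla\theta_s|^p\,dx=0$. Jensen's inequality on the unit-volume torus together with the Poincar\'e inequality $\lambda_1^{1/2}\|\theta_s\|_2\le\|\nabla\theta_s\|_2$ (available thanks to part 1) yield $\int_{\TT^d}|\nabla\theta_s|^p\,dx\ge\lambda_1^{p/2}\|\theta_s\|_2^p$, so $E(t):=\|\theta_s(t)\|_2^2$ satisfies the Bernoulli-type inequality $E'\le -2\nu\lambda_1^{p/2}E^{p/2}$. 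Separating variables and integrating gives
$$
\|\theta_s(t)\|_2^{p-2}\le\frac{\|\theta_{s,0}\|_2^{p-2}}{1+(p-2)\nu\lambda_1^{p/2}(t-s)\|\theta_{s,0}\|_2^{p-2}},
$$
and evaluating at $t-s=1/(\nu\lambda_1^{p/2})$ makes the right-hand side coincide with the threshold appearing in the definition of $\kappa_d$, producing the claim.

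The main technical obstacle I anticipate is the chain-rule identity used in part 2, i.e.\ the equality $\tfrac{d}{dt}\int_{\TT^d}\theta_s\zeta\,dx=\langle\partial_t\theta_s,\zeta\rangle+\int_{\TT^d}\theta_s\zeta_t\,dx$ for $\theta_s,\zeta\in\mathbf{Z}$. This is a Lions--Magenes-style statement for a non-Hilbert evolution triple and needs a careful time-mollification argument to accommodate the nonlinear $p$-growth structure encoded in $\mathbf{W}$. Once it is granted, the remainder of the argument reduces to incompressibility, Poincar\'e's inequality, and a textbook nonlinear Gr\"onwall computation.
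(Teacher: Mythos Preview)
Your proposal is correct and follows essentially the same route as the paper: test with $\zeta\equiv 1$ for part~1, differentiate the integrated weak identity and invoke the product rule $\partial_t\int\theta_s\zeta = \langle\partial_t\theta_s,\zeta\rangle + \int\theta_s\zeta_t$ for part~2, and for part~3 test with $\zeta=\theta_s$, use incompressibility to kill the transport term, and combine $\|\nabla\theta_s\|_p\ge\|\nabla\theta_s\|_2$ on the unit torus with Poincar\'e and a nonlinear Gronwall. The paper's proof is terser in part~2 (it simply says ``by product rule'' after differentiating both sides in $t$), whereas you correctly flag that this product rule is a Lions--Magenes-type statement requiring some care in the $\mathbf{W}/\mathbf{W}'$ framework; but the underlying argument is the same.
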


\begin{proof}
1. Recall that $\theta_{s, 0}:=\theta_0(s)$. Therefore, it suffices to note that the constant function $\zeta \equiv 1 \in {\bf Z}$. The desired claim follows from letting $t_1=0$ and $t_2=s$ in \eqref{weaksolid}.

\medskip

2. This follows by taking the $t$ derivative on both sides of \eqref{weaksolid} with $t_1=s$ and $t_2=t$, respectively. Then by Fundamental Theorem of Calculus, one has 
$$
\int_{\TT^d} \sum_{i=1}^d\left[\theta \zeta_t-\left(\nu |D_i \theta|^{p-2} D_i \theta - u_i(t) \cdot \theta \right) D_i \zeta \right] dxdt=\int_{\TT^d} \frac{\partial}{\partial t}(\theta \zeta) dx . 
$$
Since $\theta,\zeta\in {\bf W}$ and $\frac{\partial \zeta}{\partial t},\frac{\partial\theta}{\partial t}\in {\bf W}'$, one can get \eqref{weakexact} simply by product rule.
\medskip

3. Since $\theta_s \in {\bf Z}$, we can therefore let $\zeta=\theta_s$ in \eqref{weakexact}. This together with the fact that $u$ is divergence free yields the energy estimate for \eqref{maineq}: 
\begin{equation} \label{20201220eq01}
    \frac{1}{2} \partial_t \|\theta_s(t)\|_2^2+\nu \|\grad \theta_s(t)\|_p^p=0,
\end{equation}
which implies
$$
\partial_t \|\theta_s(t)\|_2^2 \le -2 \nu \|\grad \theta_s(t) \|_2^p \le -2\nu \lambda_1^{\frac{p}{2}} \|\theta_s(t)\|_2^p
$$
Therefore, an easy application of the (non-linear) Gronwall's inequality yields 
\begin{equation} \label{20201220eq02}
\norm{\theta_s(t)}_2 \le \frac{\norm{\theta_{s, 0}}_2}{\left[ \nu\lambda_1^{\frac{p}{2}}\left(p-2 \right)(t-s)\|\theta_{s, 0}\|^{p-2}_2 +1 \right]^{\frac{1}{p-2}}},
\end{equation} 
which implies the desired claim. 
\end{proof}

\section{Dissipation enhancement for evolution $p$-Laplacian advection equations}

In this section we prove Theorem \ref{mainthm01} and \ref{mainthm02}. The main idea behind the proof is to split the analysis into two different cases. In the first case, we assume $\frac{\|\nabla \theta_s(t) \|_p}{\|\theta_s(t)\|_2}$ is large, and obtain decay of $\|\theta_s\|_2$ using the energy inequality \eqref{20201220eq01}; in the second case, $\frac{\|\nabla \theta_s(t) \|_p}{\|\theta_s(t)\|_2}$ is small, and hence the dynamics are well approximated by that of the underlying inviscid dynamics. The mixing assumption now forces the generation of high frequencies, and the rapid dissipation of these gives an enhanced decay of $\|\theta_s\|_2$.

\medskip
\subsection{The Strongly Mixing Case.} Let $s \ge 0$ be any time. We first consider the case when 
\begin{equation} \label{20210119eq03}
\frac{\|\nabla \theta_s(t) \|_p}{\|\theta_s(t)\|_2}
\end{equation}
is large. More precisely, if for some $c_0>0$, we have
$$
\|\nabla \theta_s(t)\|_p \ge c_0^{\frac{1}{2}} \|\theta_s(t)\|_2, \quad \textrm{for all} \ s \le t \le t_0, 
$$
then by the energy inequality \eqref{20201220eq01}, we have
$$
\partial_t \|\theta_s(t)\|_2^2 \le -2\nu c_0^{\frac{p}{2}} \|\theta_s(t)\|_2^p,
$$
which implies 
\begin{equation} \label{20210119eq02}
\|\theta_s(t)\|_2 \le \frac{\|\theta_{s, 0}\|_2}{\left[\nu c_0^{\frac{p}{2}} (p-2)(t-s) \|\theta_{s, 0}\|_2^{p-2}+1 \right]^{\frac{1}{p-2}}}, \quad \textrm{for all} \ s \le t \le t_0. 
\end{equation}

We now turn to the second case, in which, the ratio \eqref{20210119eq03} is relatively ``small''. The idea is to use the strongly mixing assumption of $u$ to show that there exists a moment $t_0>s$, such that \eqref{20210119eq02} still hold. We now turn to the details. 

We start with understanding the relation between our non-linear model \eqref{maineq} and the transport equation. More precisely, we have the following result.

\begin{lemma} \label{20210120lem01}
Let $\phi_s$ be the solution of the following transport equation
\begin{equation} \label{20201224eq01}
\begin{cases}
\partial_t \phi_s+\left(u \cdot \grad \right) \phi_s=0; \\
\\
\phi_s(t)=\theta_{s, 0}, \quad t=s. 
\end{cases}
\end{equation} 
Then for all $t \ge s$, 
$$
\norm{\theta_s(t)-\phi_s(t)}_2^2 \le \frac{d^{\frac{p-2}{2}}D_p \nu}{\norm{\grad u}_{\infty}} \cdot e^{2\norm{\grad u}_{\infty}(t-s)} \cdot \norm{\grad \theta_{s, 0}}_p^p,
$$
where $D_p$ is defined in \eqref{20201224eq04}.
\end{lemma}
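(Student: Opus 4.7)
The plan is to estimate $w := \theta_s - \phi_s$ via a weak $L^2$ energy argument, where the diffusion term is handled by Young's inequality and the remainder is controlled using the transport structure of $\phi_s$.

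Subtracting \eqref{20201224eq01} from \eqref{maineq}, the difference $w$ solves $\partial_t w + u\cdot\nabla w = \nu \Delta_p \theta_s$ with $w(s)=0$, in the weak sense. Since both $\theta_s$ (by Theorem \ref{regthm}) and $\phi_s$ (under the assumed regularity of $\theta_{s,0}$) belong to the admissible test function space ${\bf Z}$, so does $w$, and Corollary \ref{20210223cor01} allows me to test against $w$ itself; the advection term drops by incompressibility, yielding
$$\frac{1}{2}\frac{d}{dt}\|w\|_2^2 = -\nu\int_{\TT^d} |\nabla\theta_s|^{p-2}\nabla\theta_s \cdot \nabla w \, dx.$$

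Next I decompose $\nabla w = \nabla\theta_s - \nabla\phi_s$ and apply Young's inequality with exponents $p/(p-1)$ and $p$ to the cross term $|\nabla\theta_s|^{p-1}|\nabla\phi_s|$. The bad contribution is absorbed into the negative $-\nu\|\nabla\theta_s\|_p^p$ piece, leaving the clean differential inequality
$$\frac{d}{dt}\|w\|_2^2 \;\le\; C_p \, \nu \, \|\nabla\phi_s(t)\|_p^p$$
for an absolute, $p$-dependent constant $C_p$.

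The last ingredient is a Gronwall bound on $\|\nabla\phi_s(\tau)\|_p^p$. Differentiating the transport equation \eqref{20201224eq01} gives $\partial_t(\partial_i\phi_s)+u\cdot\nabla(\partial_i\phi_s) = -(\partial_iu_j)\partial_j\phi_s$; multiplying by $|\partial_i\phi_s|^{p-2}\partial_i\phi_s$, summing in $i$, integrating over $\TT^d$, and using $\nabla\cdot u=0$ delivers $\frac{d}{d\tau}\|\nabla\phi_s(\tau)\|_p^p \le C(p,d)\|\nabla u\|_\infty \|\nabla\phi_s(\tau)\|_p^p$, which exponentiates to a bound in terms of $\|\nabla\theta_{s,0}\|_p^p$. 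Substituting and integrating the previous inequality from $s$ to $t$ produces the lemma, with $D_p$ absorbing all accumulated Young and Gronwall factors and $d^{(p-2)/2}$ coming from converting componentwise derivative norms to full gradient norms.

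The delicate point is producing the clean exponent $2\|\nabla u\|_\infty$ in place of the naive $p\|\nabla u\|_\infty$ that a direct $L^p$ transport argument gives. I expect this to require the interpolation $\|\nabla\phi_s\|_p^p \le \|\nabla\phi_s\|_\infty^{p-2}\|\nabla\phi_s\|_2^2$ combined with the sharp $L^2$ transport estimate $\|\nabla\phi_s(\tau)\|_2^2 \le e^{2\|\nabla u\|_\infty(\tau-s)}\|\nabla\theta_{s,0}\|_2^2$, with the remaining $L^\infty$ factors (controlled in turn by initial-data norms through Sobolev-type inequalities on $\TT^d$) and all $p$-dependent constants packaged into $D_p$.
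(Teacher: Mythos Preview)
Your energy argument is correct and in fact cleaner than the paper's. The paper adds and subtracts $|\nabla\omega|^{p-2}\nabla\omega$ to create a $\|\nabla\omega\|_p^p$ term on the left, then has to undo this by a mean-value-theorem estimate on $|\nabla\omega|^{p-2}-|\nabla(\omega+\phi_s)|^{p-2}$ with a case split $p\ge 3$ versus $2<p<3$. Your direct decomposition $\nabla w=\nabla\theta_s-\nabla\phi_s$ followed by Young's inequality with exponents $p/(p-1)$ and $p$ absorbs the bad piece into the good term $-\nu\|\nabla\theta_s\|_p^p$ in one line and lands on $\partial_t\|w\|_2^2\le (2\nu/p)\|\nabla\phi_s\|_p^p$, with a smaller constant than the paper's $2D_p\nu$.

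The last paragraph, however, contains a genuine gap. The interpolation $\|\nabla\phi_s\|_p^p\le \|\nabla\phi_s\|_\infty^{p-2}\|\nabla\phi_s\|_2^2$ does not recover the exponent $2\|\nabla u\|_\infty$: the factor $\|\nabla\phi_s(t)\|_\infty$ itself obeys only $\|\nabla\phi_s(t)\|_\infty\le e^{\|\nabla u\|_\infty(t-s)}\|\nabla\theta_{s,0}\|_\infty$ under transport, so $\|\nabla\phi_s\|_\infty^{p-2}$ contributes $e^{(p-2)\|\nabla u\|_\infty(t-s)}$ and you are back to $p\|\nabla u\|_\infty$. You cannot ``control the $L^\infty$ factors by initial-data norms through Sobolev-type inequalities'' uniformly in time, and they certainly cannot be absorbed into $D_p$, which by definition depends only on $p$. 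For the record, the paper's proof at this step simply asserts the bound \eqref{20201224eq03} via ``an easy application of the Gronwall's inequality'' without further detail; the standard transport estimate likewise gives exponent $p\|\nabla u\|_\infty$, so the stated $2$ appears to be a slip in the paper as well. Either exponent yields the same qualitative conclusions downstream (only the numerical constants in $H_1,H_2$ and the corollaries change), so your argument with the honest exponent $p\|\nabla u\|_\infty$ is an acceptable proof of the lemma up to the stated constant.
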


\begin{remark}\label{rmk:diff estimate}
Lemma \ref{20210120lem01} is parallel to Lemma 5.2 in \cite{FengIyer19}, which shows the distance between the viscous and inviscid problem grows at most exponentially in time. For the study of the linear problem \eqref{eq:AD}, the authors of \cite{ZelatiDelgadinoElgindi20} developed nice tricks, by estimating $\int_0^T\norm{\lap\vartheta}_2^2 dt$ from above, to capture the polynomial growing of that distance (see equation (2.12) in \cite{ZelatiDelgadinoElgindi20}). They, therefore, obtained a faster dissipation rate than \cite{FengIyer19} (see Remark 2.18 in \cite{FengIyer19} for a more detailed discussion). After checking, we find the tricks developed in \cite{ZelatiDelgadinoElgindi20} can be formally apply to \eqref{eq:AD2}. However, in this paper, we only consider the weak solution to \eqref{eq:AD2}, by the Definition \ref{weaksoldefn}, we do not have enough regularity to carry out that method rigorously. More precisely, the expression $\int_0^T\norm{\grad \cdot \left(\abs{\grad \theta }^{p-2} \grad \theta \right)}_2^2 dt$ is not well defined for \eqref{eq:AD2}. Considering this issue, we use weak formulation to recover the 
estimate in \cite{FengIyer19}.
\end{remark}
\begin{proof}
Let $\omega(t)=\theta_s(t)-\phi_s(t)$. Note that $\omega(s)=0$. Since the solution for the transport equation \eqref{20201224eq01} exists in strong sense, this means
$$
\int_{\TT^d} \left[ \frac{\partial \phi_s}{\partial t}\cdot \zeta - \sum_{i=1}^d u_i(t) \cdot \phi_s \cdot D_i \zeta \right] dx=0
$$
for any $\zeta \in {\bf Z}$. We now subtract the above equation with \eqref{weakexact} (with the same choice of $\zeta$ in both equations) to get 
\begin{eqnarray} \label{20210224eq01}
&& \int_{\TT^d} \frac{\partial \omega}{\partial t} \cdot \zeta+ \sum_{i=1}^d \left(\nu\abs{D_i \omega}^{p-2} D_i \omega - u_i(t) \cdot \omega(t) \right) \cdot D_i \zeta dx \nonumber \\
&& \quad \quad \quad = \nu \int_{\TT^d}\sum_{i=1}^d \abs{D_i \omega}^{p-2} D_i \omega \cdot D_i \zeta dx-\nu \int_{\TT^d} \sum_{i=1}^d \abs{D_i \theta}^{p-2} D_i \theta \cdot D_i \zeta dx, 
\end{eqnarray}
for any $\zeta \in {\bf Z}$. Observe that $\omega \in {\bf Z}$. Indeed, it suffices to show that $\phi_s \in {\bf Z}$, which can be easily verified by Gronwall's inequality. Therefore, we are able to take $\zeta=\omega$ in \eqref{20210224eq01}, which further gives
\begin{eqnarray}\label{e:energy identity2}
&&\frac{1}{2\nu} \partial_t \|\omega\|_2^2+\|\grad \omega\|_p^p \\
&& = \left\langle \left|\grad \omega \right|^{p-2} \grad \omega-\left|\grad \theta_s \right|^{p-2} \grad \theta_s, \grad \omega \right\rangle \nonumber\\
&&= \left\langle \left|\grad \omega \right|^{p-2} \grad \omega-\left|\grad \left(\omega+\phi_s \right) \right|^{p-2} \grad \left(\omega+\phi_s \right), \grad \omega \right\rangle \nonumber\\
&&= I_1+I_2, \nonumber
\end{eqnarray}
where 
$$
I_1:=-\left\langle \left|\grad \omega \right|^{p-2} \grad \phi_s, \grad \omega \right\rangle
$$
and
$$
I_2:= \left\langle \left(\left|\grad \omega \right|^{p-2}-\left|\grad (\omega+\phi_s)\right|^{p-2}\right) \grad (\omega+\phi_s), \grad \omega \right\rangle. 
$$

\textit{Estimate of $I_1$.}

\begin{equation} \label{20201223eq07}
I_1\le \int_{\TT^d} \left|\grad \omega \right|^{p-1} |\grad \phi_s| dx\le \|\grad \omega\|_p^{{p-1}} \|\grad \phi_s\|_p. 
\end{equation}

\textit{Estimate of $I_2$.} Note that 
\begin{eqnarray} \label{20201223eq04}
I_2%
&\le&  \int_{\TT^d} \left|\left|\grad \omega \right|^{p-2}-\left|\grad (\omega+\phi_s)\right|^{p-2} \right| \cdot \left| \grad( \omega +\phi_s) \right| \left| \grad \omega \right| dx\nonumber \\
&=& (p-2) \int_{\TT^d} \left(\alpha_{t, x}|\grad \omega|+(1-\alpha_{t, x}) \left|\grad (\omega+\phi_s) \right| \right)^{p-3}  \nonumber\\
&& \quad \quad \quad \quad \quad \quad \quad \quad \quad \left| \left|\grad \omega \right|-\left|\grad\left(\omega+\phi_s \right) \right| \right| \cdot \left| \grad( \omega +\phi_s) \right| \left| \grad \omega \right| dx\nonumber \\
&\le& (p-2)  \int_{\TT^d} \left(\alpha_{t, x}|\grad \omega|+(1-\alpha_{t, x}) \left|\grad (\omega+\phi_s) \right| \right)^{p-3} \\
&& \quad \quad \quad \quad \quad \quad \quad \quad \quad \quad \quad \quad \quad \left|\grad \phi_s \right| \cdot \left| \grad( \omega +\phi_s) \right| \left| \grad \omega \right| dx, \nonumber
\end{eqnarray}
where in the second equation above, we use the mean value theorem and $\alpha_{t, x} \in [0, 1]$ which depends on the values of $t$ and $x \in \TT^d$. We now consider two different cases for the value of $p$. 

\medskip

\textit{Case I: $p \ge 3$.} Recall for any $\ell \ge 0$ and $a, b>0$, we have
\begin{equation} \label{20201223eq06}
(a+b)^\ell \le C_\ell (a^\ell+b^\ell), 
\end{equation}
where
$$
C_\ell=
\begin{cases}
1, \quad \quad \quad 0 \le \ell \le 1; \\
2^{\ell-1}, \quad \quad \quad \ell>1. 
\end{cases}
$$
Therefore, by \eqref{20201223eq06}, 
\begin{eqnarray} \label{20201223eq05}
\eqref{20201223eq04}%
&\le& (p-2)C_{p-3} \cdot \int_{\TT^d} \left|\grad \omega \right|^{p-2} \left| \grad \phi_s \right| \left|\grad( \omega+\phi_s)\right|dx \nonumber \\
&& \quad + (p-2)C_{p-3} \cdot \int_{\TT^d} \left|\grad(\omega+\phi_s) \right|^{p-2} \left|\grad \phi_s \right| \left|\grad \omega \right| dx
\end{eqnarray}
It is easy to check that
$$
2(p-2)C_{p-3}+2(p-2)C_{p-3}C_{p-2} \le p2^p, 
$$
and hence, we have 
\begin{eqnarray} \label{20201223eq08}
&&\textrm{RHS of} \ \eqref{20201223eq05} \nonumber\\
&&\le  p2^p \cdot \bigg[\int_{\TT^d} \left|\grad \omega \right|^{p-1} \left|\grad \phi_s \right| dx+\int_{\TT^d} \left|\grad \omega \right|^{p-2} \left|\grad \phi_s \right|^2 dx \nonumber \\
&& \quad \quad \quad \quad \quad \quad \quad \quad \quad \quad \quad \quad \quad \quad \quad + \int_{\TT^d} \left| \grad \phi_s \right|^{p-1} \left|\grad \omega \right| dx \bigg] \nonumber \\
&& \le  p2^p \cdot \left(\|\grad \omega \|_p^{p-1}\|\grad \phi_s\|_p+\|\grad \phi_s \|_p^{p-1}\|\grad \omega\|_p+\|\grad \omega\|_p^{p-2} \|\grad \phi_s \|_p^2 \right) 
\end{eqnarray}

\medskip

\textit{Case II: $2<p<3$.} The second case is similar to the first one. The only difference is how we estimate the term 
\begin{equation} \label{20201223eq09}
\left(\alpha_{t, x}|\grad \omega|+(1-\alpha_{t, x}) \left|\grad (\omega+\phi_s) \right| \right)^{p-3}.
\end{equation} 
Note that since $p-3<0$, it follows that 
$$
\eqref{20201223eq09} \le \min\left\{\left(\alpha_{t, x}|\grad \omega| \right)^{p-3}, \left((1-\alpha_{t, x}) \left|\grad (\omega+\phi_s) \right| \right)^{p-3} \right\}. 
$$
An easy pigeonholing yields for each $t$ and $x$, at least one of $\alpha_{t, x}$ and $1-\alpha_{t, x}$ belongs to $\left[\frac{1}{2}, 1 \right]$, this allows us to bound \eqref{20201223eq09} further by
$$
2^{3-p} \left(\left|\grad \omega \right|^{p-3}+\left|\grad(\omega+\theta_s) \right|^{p-3} \right) \le 8\left(\left|\grad \omega \right|^{p-3}+\left|\grad(\omega+\theta_s) \right|^{p-3} \right).
$$
This implies when $2<p<3$, we have
\begin{eqnarray} \label{20201223eq10}
\eqref{20201223eq04}%
&\le& 8 (p-2) \bigg[\int_{\TT^d} \left|\grad \omega \right|^{p-2} \left|\grad \phi_s \right| \left|\grad(\omega+\phi_s) \right| dx \nonumber \\
&& \quad \quad \quad \quad \quad \quad \quad \quad+\int_{\TT^d} \left|\grad \omega \right|^{p-2} \left| \grad \phi_s \right| \left|\grad( \omega+\phi_s) \right| dx \bigg] \nonumber \\
&\le& 16  \cdot \left(\|\grad \omega \|_p^{p-1}\|\grad \phi_s\|_p+\|\grad \phi_s \|_p^{p-1}\|\grad \omega\|_p+\|\grad \omega\|_p^{p-2} \|\grad \phi_s \|_p^2 \right). 
\end{eqnarray}

\medskip

Therefore, combining both cases, namely \eqref{20201223eq08} and \eqref{20201223eq10}, we have
\begin{equation} \label{20201223eq11}
I_2 \le 16p2^p \cdot \left(\|\grad \omega \|_p^{p-1}\|\grad \phi_s\|_p+\|\grad \phi_s \|_p^{p-1}\|\grad \omega\|_p+\|\grad \omega\|_p^{p-2} \|\grad \phi_s \|_p^2 \right).
\end{equation}

\medskip

By \eqref{20201223eq07}, \eqref{20201223eq11} and a standard calculation by using Young's inequality, there holds 
$$
\frac{1}{2\nu} \partial_t \|\omega\|_2^2+\|\grad \omega\|_p^p \le I_1+I_2 \le \|\grad \omega \|_p^p+D_p \|\grad \phi_s \|_p^p,
$$
where $D_p$ is defined in \eqref{20201224eq04}. Hence, we have
\begin{equation} \label{20201224eq02}
\partial_t \|\omega\|_2^2 \le 2D_p \nu \|\grad \phi_s\|_p^p, 
\end{equation}
Since $\phi_s$ solves the transport equation \eqref{20201224eq01}, an easy application of the Gronwall's inequality yields
\begin{equation} \label{20201224eq03}
\left\|\grad \phi_s\right \|^p_p \le d^{\frac{p-2}{2}} \cdot e^{2\|\grad u\|_{\infty}(t-s)}\|\grad \theta_{s, 0}\|_p^p.
\end{equation} 

The desired result then follows from integrating \eqref{20201224eq02} from $s$ to $t$ and use the inequality \eqref{20201224eq03}. 
\end{proof}
The next lemma deals with the case when the initial data has a relatively ``small'' $W^{1, p}$ energy. 


\begin{lemma} \label{20210121lem02}
Choose $\lambda_N$ to be the largest eigenvalue satisfying $\lambda \le H_1(\nu)$ where $H_1(\nu)$ is defined as in \eqref{20210121eq14}. If
\begin{equation} \label{20201224eq05}
\|\grad \theta_{s, 0} \|_p<\lambda_N^{\frac{1}{2}} \|\theta_{s, 0}\|_2
\end{equation} 
then we have 
\begin{equation} \label{20201224eq06}
\|\theta_s(t_0)\|_2 \le \frac{\|\theta_{s, 0}\|_2}{\left(\frac{\nu H_1(\nu)^{\frac{p}{2}}}{2^{\frac{3p}{2}+1}} h^{-1} \left(\frac{H_1(\nu)^{-\frac{\alpha+\beta}{2}}}{2^{1-\frac{\alpha+\beta}{2}}} \right)^{\frac{p-2}{2}} (p-2)(t_0-s)\|\theta_{s, 0}\|_2^{p-2}+1 \right)^{\frac{1}{p-2}}}
\end{equation} 
at a time $t_0$ given by
$$
t_0:=s+2h^{-1} \left(\frac{\lambda_N^{-\frac{\alpha+\beta}{2}}}{2}  \right)
$$
\end{lemma}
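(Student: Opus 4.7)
\medskip
\noindent\textbf{Proof proposal for Lemma \ref{20210121lem02}.}

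The guiding philosophy is the standard enhanced-dissipation dichotomy, adapted to the nonlinear setting: when the gradient is small, the evolution \eqref{maineq} is well approximated by pure transport $\phi_s$; mixing forces $\phi_s$ (which conserves $L^2$) to move its mass to high frequencies; this high-frequency content then dissipates rapidly through the $p$-Laplacian. Because the hypothesis $\|\grad \theta_{s,0}\|_p<\lambda_N^{1/2}\|\theta_{s,0}\|_2$ puts us exactly in the small-gradient regime at time $s$, we can instantiate this picture on $[s,t_0]$ with $t_0=s+2h^{-1}(\lambda_N^{-(\alpha+\beta)/2}/2)$.

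First I would set up the mixing side. Since $|\TT^d|=1$ and $p>2$, Hölder gives $\|\grad \theta_{s,0}\|_2\le \|\grad \theta_{s,0}\|_p<\lambda_N^{1/2}\|\theta_{s,0}\|_2$. Interpolating between $L^2$ and $\dot H^1$ (using $0<\alpha\le 1$) yields $\|\theta_{s,0}\|_{\dot H^\alpha}\le \lambda_N^{\alpha/2}\|\theta_{s,0}\|_2$. Let $\phi_s$ solve the transport equation \eqref{20201224eq01} with the same initial data. Strong mixing \eqref{strongmix} together with the low-frequency projection bound $\|P_N g\|_2\le \lambda_N^{\beta/2}\|g\|_{\dot H^{-\beta}}$ gives
\begin{equation*}
\|P_N\phi_s(t)\|_2 \;\le\; \lambda_N^{(\alpha+\beta)/2}\,h(t-s)\,\|\theta_{s,0}\|_2, \qquad t\ge s,
\end{equation*}
so that at $t_1:=s+h^{-1}(\lambda_N^{-(\alpha+\beta)/2}/2)$ (the midpoint of $[s,t_0]$) we have $\|P_N\phi_s(t_1)\|_2\le \tfrac12\|\theta_{s,0}\|_2$. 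Monotonicity of $h$ extends this to all $t\in[t_1,t_0]$, and $L^2$ conservation for the transport equation forces $\|P_{>N}\phi_s(t)\|_2^2\ge \tfrac34\|\theta_{s,0}\|_2^2$ on that interval.

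Next I would run the transport-approximation side. Applying Lemma \ref{20210120lem01} together with the hypothesis (to bound $\|\grad \theta_{s,0}\|_p^p$ by $\lambda_N^{p/2}\|\theta_{s,0}\|_2^p$) and the energy monotonicity $\|\theta_{s,0}\|_2\le \|\theta_{0,0}\|_2$ yields, for every $t\in[s,t_0]$,
\begin{equation*}
\|\theta_s(t)-\phi_s(t)\|_2^2 \;\le\; \frac{d^{(p-2)/2}D_p\,\nu\,\lambda_N^{p/2}\|\theta_{0,0}\|_2^{p-2}}{\|\grad u\|_\infty}\,e^{4\|\grad u\|_\infty h^{-1}(\lambda_N^{-(\alpha+\beta)/2}/2)}\,\|\theta_{s,0}\|_2^2.
\end{equation*}
The precise role of $H_1(\nu)$ is now transparent: the condition $\lambda_N\le H_1(\nu)$ defined in \eqref{20210121eq14} is engineered so that the right-hand side above is suitably dominated by a controllable multiple of $\|\theta_{s,0}\|_2^2$ on $[s,t_0]$. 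Combining the mixing estimate with the $L^2$ closeness via the triangle inequality then controls $\|P_N\theta_s(t)\|_2$ on $[t_1,t_0]$, and since $\|\theta_s(t)\|_2\le \|\theta_{s,0}\|_2$, we can extract a lower bound
\begin{equation*}
\|P_{>N}\theta_s(t)\|_2 \;\ge\; c\,\|\theta_s(t)\|_2,\qquad t\in[t_1,t_0],
\end{equation*}
for an appropriate constant $c>0$ that, after bookkeeping, matches the numerical factors in the statement.

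Finally I would feed this pointwise estimate into the energy identity \eqref{20201220eq01}. Using Hölder $\|\grad \theta_s\|_p\ge\|\grad \theta_s\|_2$ (since $|\TT^d|=1$) and the spectral bound $\|\grad \theta_s\|_2^2\ge \lambda_N\|P_{>N}\theta_s\|_2^2$, one obtains on $[t_1,t_0]$ the nonlinear differential inequality $\partial_t\|\theta_s\|_2^2\le -2\nu\lambda_N^{p/2}c^p\|\theta_s\|_2^p$. Integrating via separation of variables — solving $(y^{-(p-2)/2})'\ge \nu\lambda_N^{p/2}c^p(p-2)$ from $t_1$ to $t_0$ — and using $\|\theta_s(t_1)\|_2\le \|\theta_{s,0}\|_2$ to replace $y(t_1)^{-(p-2)/2}$ by the larger quantity $\|\theta_{s,0}\|_2^{-(p-2)}$, produces an upper bound of exactly the form \eqref{20201224eq06}, with $t_0-t_1=(t_0-s)/2$ absorbing a factor of $2$ into the denominator's coefficient. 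The main technical obstacle is the careful matching of constants: the exponent of $h^{-1}$ in the stated coefficient reflects the delicate interplay between the mixing timescale, the $H_1(\nu)$-definition chosen so that the transport error is dominated, and the nonlinear Grönwall integration; I expect most of the work to be in verifying that the constant $c$ emerging from Step~3 combined with the factor $(t_0-t_1)$ reproduces the precise coefficient $\nu H_1(\nu)^{p/2}h^{-1}(\cdot)^{(p-2)/2}/2^{3p/2+1}$ displayed in \eqref{20201224eq06}.
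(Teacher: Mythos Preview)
Your overall philosophy is right and matches the paper's, but the pointwise route you take breaks down at a key step. You claim that the $H_1(\nu)$ condition forces $\|\theta_s(t)-\phi_s(t)\|_2$ to be a small multiple of $\|\theta_{s,0}\|_2$ uniformly on $[t_1,t_0]$, so that the triangle inequality yields $\|P_{>N}\theta_s(t)\|_2\ge c\,\|\theta_s(t)\|_2$ pointwise. That is not what $H_1(\nu)$ delivers. Substituting the definition \eqref{20210121eq14} into your displayed transport-error bound gives only
\[
\|\theta_s(t)-\phi_s(t)\|_2^2 \;\le\; \frac{\|\grad u\|_\infty\, h^{-1}\!\big(\tfrac12\lambda_N^{-(\alpha+\beta)/2}\big)}{4}\,\|\theta_{s,0}\|_2^2
\;=\;\frac{\|\grad u\|_\infty (t_0-s)}{8}\,\|\theta_{s,0}\|_2^2,
\]
and $\|\grad u\|_\infty(t_0-s)\to\infty$ as $\nu\to 0$ (for either power-law or exponential $h$). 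So the pointwise high-frequency lower bound on $\theta_s$ cannot be extracted, and the differential inequality $\partial_t\|\theta_s\|_2^2\le -2\nu\lambda_N^{p/2}c^p\|\theta_s\|_2^p$ is unjustified.

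The paper avoids this by working with time-integrated quantities from the start. It plugs the \emph{integrated} energy identity
\[
\|\theta_s(t_0)\|_2^2=\|\theta_{s,0}\|_2^2-2\nu\int_s^{t_0}\|\grad\theta_s(r)\|_p^p\,dr
\]
and lower-bounds $\int_s^{t_0}\|\grad\theta_s\|_p^p$ via H\"older in space-time by $\big(\int_s^{t_0}\|\grad\theta_s\|_2^2\,dr\big)^{p/2}$. The latter is handled by the spectral split you describe, but integrated in $r$ over $[\tfrac{t_0+s}{2},t_0]$: the mixing term is bounded exactly as you do, while the transport-error term becomes $\lambda_N\int\|\theta_s-\phi_s\|_2^2\,dr$, and integrating the exponential in Lemma~\ref{20210120lem01} gains precisely the extra factor $1/\big(\|\grad u\|_\infty(t_0-s)\big)$ that makes the normalized error $\le\tfrac1{16}$ under $H_1(\nu)$. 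This yields $\int_s^{t_0}\|\grad\theta_s\|_2^2\,dr\ge \tfrac18\lambda_N(t_0-s)\|\theta_{s,0}\|_2^2$, and the $p/2$-power then produces the $(t_0-s)^{p/2}$ (hence the $h^{-1}(\cdot)^{(p-2)/2}$) in the coefficient of \eqref{20201224eq06}; the final rational-function form comes from the elementary inequality $1-\tfrac{2x}{p-2}\le(1+x)^{-2/(p-2)}$. Note that even if your pointwise argument had gone through, Gr\"onwall on $[t_1,t_0]$ would only give a factor $(t_0-s)^1$ in the coefficient, missing the required $h^{-1}(\cdot)^{(p-2)/2}$ and yielding a strictly weaker bound than \eqref{20201224eq06}.
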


\begin{proof}
Integrating the energy inequality \eqref{20201220eq01}, we have
\begin{equation} \label{20210120eq06}
\|\theta_s(t)\|_2^2=\|\theta_{s, 0}\|_2^2-2\nu \int_s^t \left\|\grad \theta_s(r) \right\|_p^p dr
\end{equation}
for any $t>s$. 

We claim that for the choice of $\lambda_N$ and $t_0$ will guarantee
\begin{equation} \label{20201226eq01}
\int_s^{t_0} \|\grad \theta_s(r)\|_p^pdr \ge\left( \frac{\lambda_N(t_0-s)}{8} \right)^{\frac{p}{2}} \cdot \|\theta_{s, 0}\|_2^p.
\end{equation}
Since $p>2$, H\"older's inequality yields
$$
\int_s^{t_0} \|\grad \theta_s(r)\|_p^pdr \ge \left(\int_s^{t_0} \|\grad \theta_s(r)\|_2^2dr \right)^{\frac{p}{2}}. 
$$
We now establish a lower bound of the term 
$$
\int_s^{t_0} \|\grad \theta_s(r)\|_2^2 dr
$$
by following the argument in \cite[Lemma 5.1]{FengIyer19}. More precisely, 
\begin{eqnarray} \label{20201226eq03}
\int_s^{t_0} \|\grad \theta_s (r) \|_2^2 dr %
&\ge& \lambda_N \int_{\frac{t_0+s}{2}}^{t_0} \|(I-P_N)\theta_s(r)\|_2^2dr \nonumber \\
&\ge& \frac{\lambda_N}{2} \int_{\frac{t_0+s}{2}}^{t_0} \|(I-P_N)\phi_s(r)\|_2^2dr \nonumber \\
&& \quad \quad \quad -\lambda_N \int_{\frac{t_0+s}{2}}^{t_0} \|(I-P_N)(\theta_s(r)-\phi_s(r))\|_2^2dr \nonumber \\
&\ge& \frac{\lambda_N(t_0-s)}{4}\|\theta_{s, 0}\|_2^2-\frac{\lambda_N}{2} \int_{\frac{t_0+s}{2}}^{t_0} \|P_N \phi_s(r)\|_2^2dr \\
&& \quad \quad \quad -\lambda_N \int_{\frac{t_0+s}{2}}^{t_0} \|\theta_s(r)-\phi_s(r)\|_2^2dr,  \nonumber
\end{eqnarray}
where $P_N$ is the projection operator from $L^2$ to the subspaces spanned by the first $N$ eigvenvectors. Therefore, it suffices to bound the last two terms in \eqref{20201226eq03}. For the second term, using the strongly mixing condition \eqref{strongmix} and \eqref{20201224eq05}, we see that
\begin{eqnarray} \label{20210120eq01}
\int_{\frac{t_0+s}{2}}^{t_0} \|P_N\phi_s(r)\|_2^2dr%
&\le& \lambda_N^\beta \int_{\frac{t_0+s}{2}}^{t_0} \|\phi_s(r)\|_{\dot{H}^{-\beta}}^2dr \le \lambda_N^\beta \int_{\frac{t_0+s}{2}}^{t_0} h(r-s)^2\|\theta_{s, 0}\|_{\dot{H}^{\alpha}}^2dr \nonumber \\
&\le& \frac{\lambda_N^\beta(t_0-s)}{2} h\left(\frac{t_0-s}{2} \right)^2 \|\theta_{s, 0}\|_{\dot{H}^\alpha}^2 \nonumber\\
&\le& \frac{\lambda_N^\beta(t_0-s)}{2} h\left(\frac{t_0-s}{2} \right)^2 \| \theta_{s, 0}\|_2^{2-2\alpha} \|\grad \theta_{s, 0}\|_2^{2\alpha}\nonumber\\
&\le& \frac{\lambda_N^\beta(t_0-s)}{2} h\left(\frac{t_0-s}{2} \right)^2 \| \theta_{s, 0}\|_2^{2-2\alpha} \|\grad \theta_{s, 0}\|_p^{2\alpha}\nonumber\\
&\le& \frac{\lambda_N^{\alpha+\beta}(t_0-s)}{2} h\left(\frac{t_0-s}{2} \right)^2 \| \theta_{s, 0}\|_2^2. 
\end{eqnarray}
Finally, we bound the last term \eqref{20201226eq03}. By Lemma \ref{20210120lem01}, we have
\begin{eqnarray} \label{20210120eq02}
\int_{\frac{t_0+s}{2}}^{t_0} \|\theta_s(r)-\phi_s(r)\|_2^2dr %
&\le& \frac{d^{\frac{p-2}{2}}D_p \nu}{2\|\nabla u \|_\infty^2} \cdot e^{2\|\nabla u\|_\infty (t_0-s)} \|\nabla \theta_{s, 0}\|_p^p \nonumber \\  
&\le& \frac{\lambda_N^{\frac{p}{2}}\cdot d^{\frac{p-2}{2}}D_p \nu}{2\|\nabla u\|_\infty^2} e^{2\|\nabla u\|_\infty (t_0-s)} \|\theta_{s, 0}\|_2^p \nonumber \\
&\le& \frac{\lambda_N^{\frac{p}{2}} \cdot d^{\frac{p-2}{2}} D_p \nu \|\theta_{0, 0}\|_2^{p-2}}{2\|\nabla u\|_\infty^2} \cdot  e^{2\|\nabla u\|_\infty (t_0-s)} \|\theta_{s, 0}\|_2^2. 
\end{eqnarray}
Therefore, combining \eqref{20201226eq03} with \eqref{20210120eq01} and \eqref{20210120eq02}, we have 
\begin{eqnarray} \label{20210120eq03}
&& \int_s^{t_0} \|\nabla \theta_s(r)\|_2^2 dr \ge \lambda_N(t_0-s) \|\theta_{s, 0}\|_2^2 \cdot \bigg(\frac{1}{4}-\frac{\lambda_N^{\alpha+\beta}}{4}h\left(\frac{t_0-s}{2} \right)^2 \nonumber \\
&& \quad \quad \quad \quad  \quad \quad  \quad \quad  \quad \quad \quad \quad  -\frac{\lambda_N^{\frac{p}{2}} \cdot d^{\frac{p-2}{2}} D_p \nu \|\theta_{0, 0}\|_2^{p-2}}{2\|\nabla u\|_\infty^2(t_0-s)} \cdot  e^{2\|\nabla u\|_\infty (t_0-s)} \bigg)
\end{eqnarray}
By our choice of $\lambda_N$ and $t_0$, we have
$$
\frac{\lambda_N^{\alpha+\beta}}{4} h \left(\frac{t_0-s}{2} \right)^2 \le \frac{1}{16}, \quad \textrm{and} \quad \frac{\lambda_N^{\frac{p}{2}} \cdot d^{\frac{p-2}{2}} D_p \nu \|\theta_{0, 0}\|_2^{p-2}}{2\|\nabla u\|_\infty^2(t_0-s)} \cdot  e^{2\|\nabla u\|_\infty (t_0-s)} \le \frac{1}{16}. 
$$
Hence, \eqref{20210120eq03} reduces to
\begin{equation} \label{20210224eq23}
\int_s^{t_0}\|\nabla \theta_s(r) \|_2^2 dr \ge \frac{\lambda_N(t_0-s) \|\theta_{s, 0}\|_2^2}{8},
\end{equation} 
which further implies \eqref{20201226eq01}.

\medskip

We now turn to the proof of \eqref{20201224eq06}. To begin with, we collect several facts: 
\begin{enumerate}
    \item [1.] Recall that we are only interested in the case when $\nu$ is small, this together with the definition of $H_1(\nu)$ asserts that for $\nu$ sufficiently small, $H_1(\nu)$ is increasing when $\nu$ tends to $0$;
    
    \medskip

    \item [2.] From the definition of $H_1(\nu)$, together with the fact that $h^{-1}$ is strictly decreasing (since $h$ is assumed to be strictly deceasing), one can check that
    $$
    \lim_{\nu \to 0} \nu \lambda_N^{\frac{p}{2}} h^{-1} \left( \frac{\lambda_N^{-\frac{\alpha+\beta}{2}}}{2} \right)^{\frac{p}{2}}=0. 
    $$
    Moreover, this together with the definition of $t_0$, suggests that 
    \begin{equation} \label{20210120eq04}
       \lim_{\nu \to 0} \nu \lambda_N^{\frac{p}{2}} h^{-1} \left( \frac{\lambda_N^{-\frac{\alpha+\beta}{2}}}{2} \right)^{\frac{p-2}{2}}(t_0-s)=0;
    \end{equation}

     \medskip
     
     \item [3.] When $\nu$ is sufficiently small, there holds that
     \begin{equation} \label{20210120eq05}
     \frac{1}{2} H_1(\nu) \le \lambda_N \le H_1(\nu). 
     \end{equation} 
     This is due to Fact 1 above and the Weyl's lemma \eqref{Weylfor}.

\end{enumerate}

\medskip

Combining both \eqref{20210120eq06} and \eqref{20201226eq01}, we have
\begin{eqnarray} \label{20210121eq01}
\|\theta_s(t_0)\|_2^2 \nonumber %
&\le& \|\theta_{s, 0}\|_2^2-2\nu \left( \frac{\lambda_N(t_0-s)}{8} \right)^{\frac{p}{2}} \cdot \|\theta_{s, 0}\|_2^p \nonumber \\
&=& \|\theta_{s, 0}\|_2^2 \cdot \left[1- 2\nu \left( \frac{\lambda_N(t_0-s)}{8} \right)^{\frac{p}{2}} \cdot \|\theta_{s, 0}\|_2^{p-2} \right] \nonumber \\
&=& \|\theta_{s, 0}\|_2^2 \cdot \left[1- 2\nu \cdot \frac{\lambda_N^\frac{p}{2}}{8^{\frac{p}{2}}} \cdot (t_0-s)^{\frac{p-2}{2}} \cdot (t_0-s) \cdot \|\theta_{s, 0}\|_2^{p-2} \right] \nonumber \\
&=&  \|\theta_{s, 0}\|_2^2 \cdot \left[1- \nu \cdot \frac{\lambda_N^\frac{p}{2}}{4^{\frac{p}{2}}} \cdot h^{-1} \left( \frac{\lambda_N^{-\frac{\alpha+\beta}{2}}}{2} \right)^{\frac{p-2}{2}} \cdot (t_0-s) \cdot \|\theta_{s, 0}\|_2^{p-2} \right].
\end{eqnarray}
Note that $\|\theta_{s, 0}\|_2 \le \|\theta_{0, 0}\|_2$, by \eqref{20210120eq04}, we may assume that when $\nu$ is sufficiently small,
$$
0<\nu \cdot \frac{\lambda_N^\frac{p}{2}}{4^{\frac{p}{2}}} \cdot h^{-1} \left( \frac{\lambda_N^{-\frac{\alpha+\beta}{2}}}{2} \right)^{\frac{p-2}{2}} \cdot (t_0-s) \cdot \|\theta_{s, 0}\|_2^{p-2} <\frac{1}{2}.
$$

On the other hand, by Taylor expansion, for $0<x< 1$ and $p>2$ we have 
$$
1-\frac{2x}{p-2} \le \frac{1}{(1+x)^{\frac{2}{p-2}}}.
$$
Apply the above fact we further get 
$$
\eqref{20210121eq01} \le \frac{\|\theta_{s, 0}\|_2^2}{\left(\frac{\nu \lambda_N^{\frac{p}{2}}}{2^{p+1}} h^{-1} \left(\frac{\lambda_N^{-\frac{\alpha+\beta}{2}}}{2} \right)^{\frac{p-2}{2}} (p-2)(t_0-s)\|\theta_{s, 0}\|_2^{p-2}+1 \right)^{\frac{2}{p-2}}},
$$
which, together with \eqref{20210120eq05}, implies \eqref{20201224eq06}. The proof is complete. 
\end{proof}

The following lemma is the key ingredient to run the iteration argument when we prove Theorem \ref{mainthm01}. 


\begin{lemma} \label{20210121lem021}
For $a>0$ and $p>2$, then the function
\begin{equation}\label{e:fcn Fa}
F_a(x):=\frac{x}{(ax^{p-2}+1)^{\frac{1}{p-2}}}
\end{equation}
is increasing for $x \ge 0$.
Further, if 
\begin{equation} \label{20210121eq21}
    x_1\le F_{b(t'_1-t'_0)}(x_0),
\end{equation}
and
\begin{equation} \label{20210121eq22}
    x_2\le F_{c(t'_2-t'_1)}(x_1),
\end{equation}
where $b,c>0$, $x_0, x_1, x_2>0$, and $t'_2>t'_1>t'_0$. Then we have,
\begin{equation*}
    x_2\le F_{d(t'_2-t'_0)}(x_0), 
\end{equation*}
where $d=\min\{b, c\}$. 
\end{lemma}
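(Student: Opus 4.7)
The strategy is to exploit three elementary properties of the one-parameter family $\{F_a\}_{a \geq 0}$: monotonicity in $x$, monotonicity in $a$, and a semigroup identity $F_b \circ F_a = F_{a+b}$. Once these are in hand, the iteration statement follows by a one-line chain of inequalities.

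For monotonicity in $x$, I would raise to the power $p-2$: $F_a(x)^{p-2} = x^{p-2}/(a x^{p-2} + 1)$. Setting $y = x^{p-2}$, the right-hand side is $y/(ay+1)$, which is plainly increasing on $y \geq 0$, and since $p > 2$ the substitution $y = x^{p-2}$ is itself increasing on $x \geq 0$. The same explicit formula also shows, for fixed $x$, that $F_a(x)$ is non-increasing in $a$, since the denominator grows with $a$.

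The key algebraic ingredient is the identity $F_b(F_a(x)) = F_{a+b}(x)$. A direct computation using $F_a(x)^{p-2} = x^{p-2}/(ax^{p-2}+1)$ shows that $b F_a(x)^{p-2} + 1 = ((a+b) x^{p-2} + 1)/(ax^{p-2}+1)$, so after substituting into the definition of $F_b$, the factor $(ax^{p-2}+1)^{1/(p-2)}$ cancels and one recovers $F_{a+b}(x)$. Conceptually, $t \mapsto F_{\alpha(p-2)(t-t_0)}(x_0)$ is the solution at time $t$ of the ODE $\dot u = -\alpha u^{p-1}$ with $u(t_0) = x_0$, so this identity is really just the flow property of that ODE in disguise.

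With these three facts the iteration step is immediate. Monotonicity in $x$ lets one substitute the estimate for $x_1$ into the estimate for $x_2$, yielding
\[
x_2 \leq F_{c(t_2'-t_1')}\bigl(F_{b(t_1'-t_0')}(x_0)\bigr) = F_{c(t_2'-t_1') + b(t_1'-t_0')}(x_0),
\]
where the equality uses the semigroup identity. Since $c(t_2'-t_1') + b(t_1'-t_0') \geq d(t_2'-t_0')$ with $d = \min\{b,c\}$, monotonicity in $a$ gives $F_{c(t_2'-t_1') + b(t_1'-t_0')}(x_0) \leq F_{d(t_2'-t_0')}(x_0)$, which is the desired conclusion. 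No step presents a real obstacle; the only non-routine observation is the semigroup identity, but once guessed it is a short verification.
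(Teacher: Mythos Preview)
Your proof is correct and follows essentially the same route as the paper's: both establish that $F_a$ is increasing in $x$, then compose the two hypotheses via the identity $F_b \circ F_a = F_{a+b}$ (the paper computes this inline rather than naming it a semigroup property), and finally use that $F_a(x)$ is non-increasing in $a$ to replace $b(t_1'-t_0') + c(t_2'-t_1')$ by $d(t_2'-t_0')$. Your ODE remark is a pleasant addition but not needed for the argument.
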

\begin{proof}
It is easy to check that $F'_a(x)=\frac{1}{(ax^{p-2}+1)^{\frac{p-1}{p-2}}}>0$, which clearly yields the first claim. And by a direct computation we have
\begin{eqnarray}
    x_2
    &\le& F_{c(t'_2-t'_1)}(F_{b(t'_1-t'_0)}(x_0)) \nonumber  \\
    &=& \frac{x_0}{\big(c(t'_2-t'_1)x_{0}^{p-2}+b(t'_1-t'_0)x_{0}^{p-2}+1\big)^{1/p-2}} \nonumber \\
    &\le& \frac{x_0}{\big(d(t'_2-t'_0)x_{0}^{p-2}+1\big)^{1/p-2}} = F_{d(t'_2-t'_0)}(x_0)\nonumber 
\end{eqnarray}
\end{proof}
Finally, let us turn to prove the main result Theorem \ref{mainthm01}. 
\begin{proof} [Proof of Theorem \ref{mainthm01}.]
Repeatedly applying  \eqref{20210119eq02} with $c_0=\lambda_N$ and Lemma \ref{20210121lem02}, together with the fact \eqref{20210120eq05} and Lemma \ref{20210121lem021},  we obtain an increasing sequence of times $(t_k')$, such that for each $k \ge 1$, there holds 
\begin{equation} \label{20210121eq02}
    \|\theta_s(t_k')\|_2 \le \frac{\|\theta_{s, 0}\|_2}{\left( \frac{\nu H_1(\nu)^{\frac{p}{2}} {\mathcal H}_{1, \nu, h}}{2^{\frac{p}{2}}} \cdot (p-2)(t_k'-s) \|\theta_{s, 0}\|_2^{p-2}+1 \right)^{\frac{1}{p-2}}},
\end{equation} 
where ${\mathcal H}_{1, \nu, h}$ is defined in \eqref{20210121eq15}, that is
$$
{\mathcal H}_{1, \nu, h}:=\min\left\{1, \ 2^{-p-1} \cdot h^{-1} \left( \frac{H_1(\nu)^{-\frac{\alpha+\beta}{2}}}{2^{1-\frac{\alpha+\beta}{2}}} \right)^{\frac{p-2}{2}} \right\}. 
$$
and
$$
t_{k+1}'-t_k' \le t_0. 
$$
Let us include more details for the iteration argument above. Indeed, although our estimates \eqref{20210119eq02} and \eqref{20201224eq06} are no longer linear, the iteration structure still works. For example, let $t'_0:=s$ and suppose
\begin{equation} \label{20210121eq03}
\|\theta_s(t'_1)\|_2 \le \frac{\|\theta_{s, 0}\|_2}{\left(\frac{\nu H_1(\nu)^{\frac{p}{2}}}{2^{\frac{p}{2}}}(p-2)(t'_1-s) \|\theta_{s, 0}\|_2^{p-2}+1 \right)^{\frac{1}{p-2}}}
\end{equation} 
and
\begin{equation} \label{20210121eq04}
\|\theta_s(t'_2)\|_2 \le \frac{\|\theta_s(t'_1)\|_2}{\left(\frac{\nu H_1(\nu)^{\frac{p}{2}}}{2^{\frac{3p}{2}+1}} h^{-1} \left(\frac{H_1(\nu)^{-\frac{\alpha+\beta}{2}}}{2^{1-\frac{\alpha+\beta}{2}}} \right)^{\frac{p-2}{2}} (p-2)(t'_2-t'_1)\|\theta_s(t'_1) \|_2^{p-2}+1 \right)^{\frac{1}{p-2}}}.
\end{equation} 
Applying Lemma \ref{20210121lem021} with \eqref{20210121eq21} being replaced by \eqref{20210121eq03}, and \eqref{20210121eq22} being replaced by \eqref{20210121eq04}, respectively, we have
\begin{equation} \label{20210121eq05}
\|\theta_s(t_2')\|_2 \le \frac{\|\theta_{s, 0}\|_2}{\left( \frac{\nu H_1(\nu)^{\frac{p}{2}} {\mathcal H}_{1, \nu, h}}{2^{\frac{p}{2}}} \cdot (p-2)(t_2'-s) \|\theta_{s, 0}\|_2^{p-2}+1 \right)^{\frac{1}{p-2}}}.
\end{equation} 
It is then clear that \eqref{20210121eq02} follows by applying the above procedure $k$ times. 

\medskip

Finally, note that from \eqref{20210121eq02}, we can immediately concludes that
\begin{equation} \label{20210121eq11}
\kappa_d \le \frac{2^{\frac{p}{2}}}{\nu H_1(\nu)^{\frac{p}{2}}{\mathcal H}_{1, \nu, h}}+(t_0-s).
\end{equation} 
By the definition of $H_1(\nu)$ and the choice of $\lambda_N$ and $t_0$, there exists some constant $C'>0$, which only depends on $h$, $\|\nabla u\|_\infty$, $p$, $\|\theta_{0, 0}\|_2$, the strongly mixing condition and any dimension constants, such that 
\begin{equation} \label{20210121eq12}
t_0-s \le \frac{C'}{\nu H_1(\nu)^{\frac{p}{2}}} \le \frac{C'}{\nu H_1(\nu)^{\frac{p}{2}}{\mathcal H}_{1, \nu, h}},
\end{equation} 
where in the second estimate above, we have used the fact that ${\mathcal H}_{1, \nu, h} \le 1$. The desired estimate \eqref{20210121eq13} then clearly follows from \eqref{20210121eq11} and \eqref{20210121eq12}. 
\end{proof}

To prove Corollary \ref{strongcor}, it suffices to compute the function $H_1$ explicitly for the specific rate functions of interest. 

\begin{proof} [Proof of Corollary \ref{strongcor}]
We first note that when $\nu \ll 1$, ${\mathcal H}_{1, \nu, h} \equiv 1$. When the mixing rate function $h$ is given by the power law \eqref{powerlaw}, we compute 
\begin{equation} \label{20210224eq10}
H_1(\nu)=C_0 \abs{\ln \nu}^{\frac{2q}{\alpha+\beta}},
\end{equation} 
where $C_0=C_0(\alpha, \beta, c, p, d, \|\nabla u\|_{\infty}, \|\theta_{0, 0}\|_2)$. The desired estimate \eqref{corres1} then follows by substituting \eqref{20210224eq10} into \eqref{20210121eq13}. 

When the mixing rate function $h$ is given by the exponential law \eqref{explaw}, we have
$$
H_1(\nu)=\nu^{-\frac{2c_2}{pC_2+4\|\nabla u \|_{\infty}(\alpha+\beta)}}.
$$
This together with \eqref{20210121eq13} yields the desired estimate \eqref{corres2}. 
 \end{proof}

\medskip

\subsection{The Weakly Mixing Case.} We now turn to the proof of Theorem \ref{mainthm02}. The proof is similar to the proof of Theorem \ref{mainthm01}. The main difference is that the analog of Lemma \ref{20210121lem02} for the weakly mixing case is weaker. More precisely,we have the following result.

\begin{lemma} \label{20210224lem01}
Choose $\lambda_N$ to be the largest eigenvalue satisfying $\lambda \le H_2(\nu)$ where $H_2(\nu)$ is defined as in \eqref{20210121eq14new}. If
\begin{equation} \label{20210224eq11}
\|\grad \theta_{s, 0} \|_p<\lambda_N^{\frac{1}{2}} \|\theta_{s, 0}\|_2
\end{equation} 
then we have 
\begin{equation} \label{20210224eq12}
\|\theta_s(t)\|_2 \le \frac{\|\theta_{s, 0}\|_2}{\left(\frac{\nu H_2(\nu)^{\frac{p}{2}}}{2^{\frac{3p}{2}+1}} h^{-1} \left(\frac{H_2(\nu)^{-\frac{d+2\alpha+2\beta}{4}}}{2^{1-\frac{d+2\alpha+2\beta}{4}}} \right)^{\frac{p-2}{2}} (p-2)(t_0-s)\|\theta_{s, 0}\|_2^{p-2}+1 \right)^{\frac{1}{p-2}}}
\end{equation} 
at a time $t_0$ given by
$$
t_0:=s+2h^{-1} \left(\frac{ \lambda_N^{-\frac{d+2\alpha+2\beta}{4}}}{2 \sqrt{\mathfrak{c}}} \right),
$$
where $\mathfrak{c}$ is defined in \eqref{20210224eq100}. 
\end{lemma}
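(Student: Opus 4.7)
The plan is to follow the proof of Lemma \ref{20210121lem02} essentially verbatim, substituting only one ingredient: the bound on the low-frequency part of the pure-transport solution $\phi_s$, where the equivalent dual-Sobolev-norm formulation available under strong mixing must be replaced by a mode-by-mode application of the weak mixing hypothesis \eqref{weakmix} together with Weyl's asymptotic. Concretely, I would start from the energy identity \eqref{20210120eq06} and target the intermediate lower bound $\int_s^{t_0}\|\grad\theta_s(r)\|_p^p\,dr \ge (\lambda_N(t_0-s)/8)^{p/2}\|\theta_{s,0}\|_2^p$. Hölder's inequality in time (as in Lemma \ref{20210121lem02}) reduces this to the $L^2$ claim
\begin{equation*}
\int_s^{t_0}\|\grad\theta_s(r)\|_2^2\,dr \ \ge \ \tfrac{1}{8}\,\lambda_N(t_0-s)\|\theta_{s,0}\|_2^2,
\end{equation*}
and the spectral splitting $\|\grad\theta_s\|_2^2 \ge \lambda_N \|(I-P_N)\theta_s\|_2^2$ followed by the triangle-inequality comparison with $\phi_s$ leaves me to control the two errors $\int_{(t_0+s)/2}^{t_0}\|P_N\phi_s\|_2^2\,dr$ and $\int_{(t_0+s)/2}^{t_0}\|\theta_s-\phi_s\|_2^2\,dr$. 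The second is absorbed, exactly as in the strong-mixing proof, by Lemma \ref{20210120lem01} and the definition of $H_2(\nu)$.

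The real work lies in the first error term. Writing $\phi_s(r)=\theta_{s,0}\circ\varphi_{s,r}$ and expanding in the eigenbasis of $-\lap$ gives
\begin{equation*}
\int_s^{t_0}\|P_N\phi_s(r)\|_2^2\,dr \,=\, \sum_{j=1}^N\int_s^{t_0}|\langle\theta_{s,0}\circ\varphi_{s,r},e_j\rangle|^2\,dr \,\le\, (t_0-s)\,h(t_0-s)^2\,\|\theta_{s,0}\|_{\dot H^\alpha}^2 \sum_{j=1}^N \lambda_j^\beta,
\end{equation*}
where I apply \eqref{weakmix} to each mode separately with $g=e_j$ (so $\|e_j\|_{\dot H^\beta}=\lambda_j^{\beta/2}$). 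Weyl's formula (in the form of Remark \ref{20210224rem01}, which gives $N\le \mathfrak{c}\lambda_N^{d/2}$) then yields $\sum_{j=1}^N\lambda_j^\beta \le \mathfrak{c}\lambda_N^{d/2+\beta}$, while the interpolation $\|\theta_{s,0}\|_{\dot H^\alpha}\le \|\theta_{s,0}\|_2^{1-\alpha}\|\grad\theta_{s,0}\|_2^\alpha$ combined with hypothesis \eqref{20210224eq11} (and $\|\grad\cdot\|_2\le \|\grad\cdot\|_p$ on the unit torus) produces $\|\theta_{s,0}\|_{\dot H^\alpha}^2\le \lambda_N^\alpha\|\theta_{s,0}\|_2^2$. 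The net bound
\begin{equation*}
\int_s^{t_0}\|P_N\phi_s\|_2^2\,dr \,\le\, \mathfrak{c}\,(t_0-s)\,h(t_0-s)^2\,\lambda_N^{d/2+\alpha+\beta}\,\|\theta_{s,0}\|_2^2
\end{equation*}
should be compared with the $\lambda_N^{\alpha+\beta}$ that appeared in the strong-mixing case: the additional factor $\mathfrak{c}\lambda_N^{d/2}$ is exactly what forces the exponent $(\alpha+\beta)/2$ in the definition of $H_1$ to become $(d+2\alpha+2\beta)/4$ in the definition of $H_2$, and it accounts for the $\sqrt{\mathfrak{c}}$ appearing inside $h^{-1}$.

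With the choice $t_0-s = 2h^{-1}\!\bigl(\lambda_N^{-(d+2\alpha+2\beta)/4}/(2\sqrt{\mathfrak{c}})\bigr)$ and $\lambda_N\le H_2(\nu)$, both correction terms become $\le \tfrac{1}{16}\lambda_N(t_0-s)\|\theta_{s,0}\|_2^2$, giving the required lower bound on $\int\|\grad\theta_s\|_2^2$. Substituting back into the energy identity and invoking the Taylor inequality $1-\tfrac{2x}{p-2}\le (1+x)^{-2/(p-2)}$ for $0<x<1$ and $p>2$ (exactly as at the end of the proof of Lemma \ref{20210121lem02}), together with $\lambda_N\in [H_2(\nu)/2, H_2(\nu)]$ from Weyl, yields the rational decay \eqref{20210224eq12}. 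The principal — indeed the only — obstacle is the mode-by-mode estimate above: because weak mixing is \emph{not} equivalent to a pointwise-in-time decay of $\|\phi_s\|_{\dot H^{-\beta}}$, one cannot bypass summing $\lambda_j^\beta$ for $j\le N$, so the dimension-dependent loss $\lambda_N^{d/2}$ in the Weyl sum is unavoidable within this framework.
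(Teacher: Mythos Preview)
Your proposal is correct and follows essentially the same route as the paper: the paper also replaces only the $P_N\phi_s$ estimate, expanding mode-by-mode, applying \eqref{weakmix} with $g=e_\ell$, bounding $\sum_{\ell\le N}\lambda_\ell^\beta\le N\lambda_N^\beta\le\mathfrak{c}\,\lambda_N^{d/2+\beta}$ via Weyl, and then invoking the interpolation $\|\theta_{s,0}\|_{\dot H^\alpha}^2\le\lambda_N^\alpha\|\theta_{s,0}\|_2^2$ from \eqref{20210224eq11}. The only cosmetic discrepancy is that the paper carries out the weak-mixing bound over the half-interval $[(t_0+s)/2,t_0]$ (so that $h((t_0-s)/2)$ appears, matching the definition of $t_0$ exactly), whereas your displayed estimate is written over $[s,t_0]$; since the integrand is nonnegative and $h$ is decreasing this is harmless and the constants absorb the difference.
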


\begin{proof} [Proof of Theorem \ref{mainthm02}]
Given Lemma \ref{20210224lem01}, the proof of Theorem \ref{mainthm02} is the same as the proof of Theorem \ref{mainthm01}. 
\end{proof}

Moreover, as in the proof of Corollary \ref{strongcor}, the proof of Corollary \ref{weakcor} only involves computing $H_2$ explicitly when $h$ is assumed to be power law or exponential, and hence we would like to leave the detail to the interested reader. Finally, we proof Lemma \ref{20210224lem01}.

\begin{proof} [Proof of Lemma \ref{20210224lem01}]
The only difference between the proof of Lemma \ref{20210224lem01} and Lemma \ref{20210121lem02} is that when we estimate the term
$$
\int_{\frac{t_0+s}{2}}^{t_0} \|P_N \phi_s(r)\|_2^2dr, 
$$
instead of using the strongly mixing assumption, we need to bound it via the weakly mixing assumption \eqref{weakmix}. More precisely, we have
\begin{eqnarray} \label{20210224eq13}
\int_{\frac{t_0+s}{2}}^{t_0} \|P_N \phi_s(r)\|_2^2dr%
&\le& \int_{\frac{t_0+s}{2}}^{t_0} \sum_{\ell=1}^N \abs{\langle \phi_s(r), e_\ell \rangle}^2 dr \nonumber \\
&\le& \sum_{\ell=1}^N \frac{t_0-s}{2} h \left(\frac{t_0-s}{2} \right)^2 \|\phi_{s}(0)\|_{\dot{H}^{\alpha}}^2 \lambda_\ell^\beta \nonumber \\
&\le& \frac{N(t_0-s)}{2} h\left(\frac{t_0-s}{2} \right)^2 \lambda_N^\beta \|\theta_{s, 0}\|_{\dot{H}^\alpha}^2 \nonumber \\
&\le& \frac{N (t_0-s)}{2} h\left(\frac{t_0-s}{2} \right)^2 \cdot \lambda_N^{\alpha+\beta} \|\theta_{s, 0}\|_2^2 \nonumber  \\
&\le& \frac{\mathfrak{c} (t_0-s)}{2} h\left(\frac{t_0-s}{2} \right)^2 \cdot \lambda_N^{\frac{d+2\alpha+2\beta}{2}} \|\theta_{s, 0}\|_2^2. 
\end{eqnarray}
Here in the last estimate above, we have used the Weyl's formula \eqref{Weylfor} and Remark \ref{20210224rem01}.

Therefore, as the proof of Lemma \ref{20210121lem02}, we substitute \eqref{20210224eq13} and \eqref{20210120eq02} into \eqref{20201226eq03}, we see that 
\begin{eqnarray} \label{20210224eq21}
&& \int_s^{t_0} \|\nabla \theta_s(r)\|_2^2 dr \ge \lambda_N(t_0-s) \|\theta_{s, 0}\|_2^2 \cdot \bigg(\frac{1}{4}-\frac{\mathfrak{c}\lambda_N^{\frac{d+2\alpha+2\beta}{2}}}{4}h\left(\frac{t_0-s}{2} \right)^2 \nonumber \\
&& \quad \quad \quad \quad  \quad \quad  \quad \quad  \quad \quad \quad \quad  -\frac{\lambda_N^{\frac{p}{2}} \cdot d^{\frac{p-2}{2}} D_p \nu \|\theta_{0, 0}\|_2^{p-2}}{2\|\nabla u\|_\infty^2(t_0-s)} \cdot  e^{2\|\nabla u\|_\infty (t_0-s)} \bigg).
\end{eqnarray}
Finally, by the choice of $\lambda_N$ and $t_0$, we have
$$
\frac{\mathfrak{c}\lambda_N^{\frac{d+2\alpha+2\beta}{2}}}{4}h\left(\frac{t_0-s}{2} \right)^2 \le \frac{1}{16}, \quad \textrm{and} \quad \frac{\lambda_N^{\frac{p}{2}} \cdot d^{\frac{p-2}{2}} D_p \nu \|\theta_{0, 0}\|_2^{p-2}}{2\|\nabla u\|_\infty^2(t_0-s)} \cdot  e^{2\|\nabla u\|_\infty (t_0-s)} \le \frac{1}{16}, 
$$
 hence \eqref{20210224eq23} is still true in this case. The rest of the proof is then the same as those in the proof of Lemma \ref{20210121lem02}.
\end{proof}

\bibliographystyle{plain}
\bibliography{refs,preprints}

\end{document}